\documentclass[11pt,a4paper]{article}

\usepackage[a4paper, margin=2.5cm]{geometry}
\usepackage{graphicx}
\usepackage{multirow}
\usepackage{amsmath,amssymb,amsfonts}
\usepackage{amsthm}
\usepackage{mathrsfs}
\usepackage[title]{appendix}
\usepackage{xcolor}
\usepackage{textcomp}
\usepackage{booktabs}
\usepackage{tabularx}
\usepackage{subcaption}
\usepackage{hyperref}
\usepackage{url}
\usepackage{natbib}
\usepackage{rotating}
\usepackage{iftex}
\ifpdftex
  \usepackage{epstopdf}
\fi

\graphicspath{{images/}}

\theoremstyle{plain}
\newtheorem{theorem}{Theorem}
\newtheorem{proposition}[theorem]{Proposition}

\theoremstyle{definition}

\newtheorem{remark}{Remark}

\begin{document}

\title{Comparing Physics-Informed and Neural ODE Approaches for Modeling Nonlinear Biological Systems:
A Case Study Based on the Morris--Lecar Model}

\author{Nikolaos M.\ Matzakos\thanks{%
  Department of Education,
  School of Pedagogical and Technological Education (ASPETE),
  141\,21 Athens, Greece.
  E-mail: \texttt{nikmatz@aspete.gr}.
  ORCID: \texttt{0000-0001-8647-6082}.}
\and
Chrisovalantis Sfyrakis\thanks{%
  Department of Mechanical Engineering Educators,
  School of Pedagogical and Technological Education (ASPETE),
  151\,22 Athens, Greece.
  E-mail: \texttt{hammer@aspete.gr}.}}

\date{}

\maketitle

\footnotetext{This is a preprint version of a manuscript currently under review.}

\begin{abstract}
Physics-Informed Neural Networks (PINNs) and Neural Ordinary Differential Equations (NODEs) represent two distinct machine learning frameworks for modeling nonlinear neuronal dynamics. This study systematically evaluates their performance on the two-dimensional Morris--Lecar model across three canonical bifurcation regimes: Hopf, Saddle-Node on Limit Cycle, and homoclinic orbit. Synthetic time-series data are generated via numerical integration under controlled conditions, and training is performed using collocation points for PINNs and adaptive solvers for NODEs (Dormand--Prince method).

PINNs incorporate the governing differential equations into the loss function using automatic differentiation, which enforces physical consistency during training. In contrast, NODEs learn the system's vector field directly from data, without prior structural assumptions or inductive bias toward physical laws. Model performance is assessed using standard regression metrics, including Mean Squared Error (MSE), Mean Absolute Error (MAE), Mean Absolute Percentage Error (MAPE), and the coefficient of determination.

Results indicate that PINNs tend to achieve higher accuracy and robustness in scenarios involving stiffness or sensitive bifurcations, owing to their embedded physical structure. NODEs, while more expressive and flexible, operate as black-box approximators without structural constraints, which can lead to reduced interpretability and stability in these regimes. Although advanced variants of NODEs (e.g., ANODEs, latent NODEs) aim to mitigate such limitations, their performance under stiff dynamics remains an open question. These findings emphasize the trade-offs between physics-informed models, which embed structure and interpretability, and purely data-driven approaches, which prioritize flexibility at the cost of physical consistency.
\end{abstract}

\medskip
\noindent\textbf{Keywords:} Physics-Informed Neural Networks (PINNs), Neural Ordinary Differential Equations (NODEs), Morris--Lecar model, Computational neuroscience

\medskip
\noindent\textbf{MSC Classification:} 68T07, 65L05, 92B20, 34C23

\section{Introduction}\label{sec1}
Mathematical modeling of biological neurons is essential for understanding how electrical signals propagate in excitable cells. Among the foundational models in computational neuroscience \cite{gasparinatou2022}, the Morris–Lecar model \cite{morris1981} is widely recognized for its balance between biophysical realism and mathematical tractability. It describes membrane voltage dynamics through a system of nonlinear ordinary differential equations and is frequently used to test machine learning–based solvers for time-dependent biological systems.
Recent advances in deep learning have introduced data-driven frameworks for solving differential equations, with two notable approaches emerging: Neural Ordinary Differential Equations (NODEs) and Physics-Informed Neural Networks (PINNs). Both utilize neural networks, but with distinct philosophies. NODEs learn the underlying dynamics of a system by approximating the vector field directly from observed time-series data \cite{chen2019, dupont2019, massaroli2021}. In contrast, PINNs encode prior knowledge of the governing differential equations and any initial or boundary conditions into the loss function, enabling solution learning through automatic differentiation \cite{raissi2019, karniadakis2021, lu2021}.
Despite their widespread use, direct empirical comparisons between NODEs and PINNs on the same dynamical system remain limited. This study addresses that gap by systematically applying both methods to the Morris–Lecar model and evaluating their performance across five key dimensions: predictive accuracy, data efficiency, noise robustness, computational cost, and interpretability of the learned dynamics.
The Morris–Lecar system provides an ideal benchmark for such a comparison. Its low dimensionality, analytically defined equations, and rich dynamical behavior—including oscillations, excitability, and bifurcations—enable a rigorous evaluation of modeling capabilities. By testing NODEs and PINNs under identical experimental conditions, we explore how each framework handles the complexities of biologically inspired nonlinear dynamics.
Beyond this focused comparison, our findings contribute to the broader discussion around hybrid modeling approaches that combine data-driven learning with embedded physical constraints, such as latent PINNs or neural operators. As the field moves toward more generalizable and robust solvers for dynamical systems, systematic benchmarking on biologically grounded models remains a critical step.
\section{Background}
\label{sec2}

\subsection{The Morris--Lecar Model}
\label{sec2.1}

We consider the standard two-dimensional version of the Morris–Lecar~\cite{morris1981} model, which, due to its low dimensionality and rich nonlinear behavior, has recently been explored as a benchmark for learning dynamical systems from data, especially in computational neuroscience. The Morris--Lecar model describes membrane voltage oscillations in barnacle muscle fibers and has become a canonical example of a two-dimensional neuronal oscillator. It is governed by the nonlinear system:
\begin{equation}
\begin{cases}
C \dfrac{dV}{dt} = -g_{\text{Ca}} M_\infty(V)(V - V_{\text{Ca}}) - g_K N (V - V_K) - g_L (V - V_L) + I, \\[6pt]
\dfrac{dN}{dt} = \varphi \, \dfrac{N_\infty(V) - N}{\tau_N(V)}
\end{cases}
\label{eq1}
\end{equation}
where $V(t)$ is the membrane potential, $N(t)$ is the potassium gating variable, $C$ is the membrane capacitance, $g_{\text{Ca}}, g_K, g_L$ are maximal conductances, and $V_{\text{Ca}}, V_K, V_L$ are the reversal potentials for calcium, potassium, and leak currents. The external current $I$ acts as a bifurcation parameter.

The voltage-dependent functions are defined as:
\begin{align}
M_\infty(V) &= \frac{1}{2} \left[ 1 + \tanh\left( \frac{V - V_1}{V_2} \right) \right], \label{eq:Minf} \\
N_\infty(V) &= \frac{1}{2} \left[ 1 + \tanh\left( \frac{V - V_3}{V_4} \right) \right], \label{eq:Ninf} \\
\tau_N(V) &= \frac{1}{\cosh\left( \frac{V - V_3}{2 V_4} \right)}. \label{eq:tauN}
\end{align}
This model belongs to the class of fast--slow systems \cite{fenichel1979,ermentrout2010, li2011}, as the voltage $V$ evolves much faster than $N$ due to the small parameter $\varphi \ll 1$. This time-scale separation enables bifurcation and singular perturbation analysis. Let us denote the system compactly as
\begin{equation} 
\frac{d}{dt}
\begin{pmatrix}
V \\
N
\end{pmatrix}
= F(V,N),
\end{equation}
with $F: \mathbb{R}^2 \to \mathbb{R}^2$ a smooth vector field.

Steady states $(V^*, N^*)$ satisfy $F(V^*, N^*) = 0$, or equivalently:
\begin{equation*}
\left\{
\begin{aligned}
0 &= -g_{\text{Ca}} M_\infty(V^*)(V^* - V_{\text{Ca}}) - g_K N^* (V^* - V_K) - g_L (V^* - V_L) + I, \\
N^* &= N_\infty(V^*)
\end{aligned}
\right.
\end{equation*}
To justify the subsequent stability and learning analysis, we first confirm the existence of an equilibrium point for the system.
\begin{proposition}
\label{prop2.1}
Consider the Morris--Lecar model as given in equation~(1), with a fixed external input current $I \in \mathbb{R}$ and continuous functions $M_\infty(V)$, $N_\infty(V)$, and $\tau_N(V)$. Then, the system admits at least one equilibrium point $(V^*, N^*) \in \mathbb{R}^2$.
\end{proposition}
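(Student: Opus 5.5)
Since the second equilibrium equation gives $N^*=N_\infty(V^*)$ explicitly, the plan is to reduce the problem to a single scalar equation in $V$ and apply the intermediate value theorem. Substituting $N=N_\infty(V)$ into the first equation defines
\[
G(V) = -g_{\text{Ca}} M_\infty(V)(V - V_{\text{Ca}}) - g_K N_\infty(V)(V - V_K) - g_L (V - V_L) + I ,
\]
which is continuous on $\mathbb{R}$ because $M_\infty$ and $N_\infty$ are. Equilibria of the system are then exactly the pairs $(V^*,N_\infty(V^*))$ with $G(V^*)=0$. The right-hand side of the $N$-equation vanishes there for any finite positive $\tau_N(V^*)$, and the explicit formula $\tau_N=1/\cosh(\cdot)$ gives $\tau_N>0$, so the gating equation introduces no extra constraint. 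It therefore suffices to find one zero of $G$.

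Next I will determine the sign of $G$ at $\pm\infty$. Both $M_\infty$ and $N_\infty$ take values in $[0,1]$, so $G(V)$ is at most affine in $V$ with controlled coefficients. Writing $G(V) = -\bigl(g_{\text{Ca}}M_\infty(V)+g_K N_\infty(V)+g_L\bigr)V + R(V)$, the term $R(V)$ equals $g_{\text{Ca}}M_\infty V_{\text{Ca}} + g_K N_\infty V_K + g_L V_L + I$ and is bounded. The coefficient of $V$ is at least $g_L$ in absolute value, assuming the standing physiological hypothesis that the conductances are nonnegative with $g_L>0$. Hence $G(V)\to-\infty$ as $V\to+\infty$ and $G(V)\to+\infty$ as $V\to-\infty$. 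Concretely, I will choose $V_+ > (|R|_{\max})/g_L$ so that $G(V_+)<0$, and choose $V_-$ with $G(V_-)>0$ symmetrically.

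The intermediate value theorem then gives $V^*\in(V_-,V_+)$ with $G(V^*)=0$, and setting $N^*=N_\infty(V^*)\in[0,1]$ yields the equilibrium.

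The main obstacle is that the statement only assumes continuity of $M_\infty,N_\infty,\tau_N$. The argument genuinely needs three further facts: boundedness of $M_\infty$ and $N_\infty$, positivity of $\tau_N$, and $g_L>0$ with nonnegative conductances. I will make these explicit. They follow from the concrete formulas \eqref{eq:Minf}--\eqref{eq:tauN} and from the physical meaning of the parameters, so the proof will invoke those definitions rather than mere continuity.
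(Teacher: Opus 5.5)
Your proposal is correct and follows the same route as the paper's sketch. You substitute $N=N_\infty(V)$ into the voltage equation, use the bounded sigmoids and the leak term to get opposite signs at $\pm\infty$, apply the intermediate value theorem, and set $N^*=N_\infty(V^*)$. Your version is slightly more careful, because you state explicitly the assumptions the paper uses only tacitly: $g_L>0$, nonnegative conductances, bounded $M_\infty$ and $N_\infty$, and $\tau_N(V^*)\neq 0$, beyond the continuity the statement assumes.
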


\begin{proof}[Sketch of proof]
At equilibrium, the time derivatives in equation~(1) vanish, yielding the algebraic conditions
\[
\begin{cases}
0 = -g_{Ca} M_\infty(V)(V - V_{Ca}) - g_K N(V - V_K) - g_L(V - V_L) + I, \\
N = N_\infty(V).
\end{cases}
\]
Substituting the second equation into the first gives a scalar nonlinear equation for $V$:
\[
f(V) := g_{Ca} M_\infty(V)(V - V_{Ca}) + g_K N_\infty(V)(V - V_K) + g_L(V - V_L) = I.
\]
The functions $M_\infty(V)$ and $N_\infty(V)$ are smooth sigmoidal maps, hence continuous and bounded on $\mathbb{R}$, so $f(V)$ is continuous. Moreover, due to the linear leak term, we have $\lim_{V \to -\infty} f(V) = -\infty$ and $\lim_{V \to +\infty} f(V) = +\infty$. By the Intermediate Value Theorem, there exists $V^* \in \mathbb{R}$ such that $f(V^*) = I$. Setting $N^* := N_\infty(V^*)$, we obtain an equilibrium point $(V^*, N^*)$.
\end{proof}
See~\cite{izhikevich2006, ermentrout2010, azizi2020, li2011} for related discussions on equilibrium points and bifurcations in planar conductance-based neuron models. The behavior near equilibrium points is crucial for understanding the phase space geometry, which directly impacts the performance and learnability of PINNs and NODEs trained on such systems.

To analyze the local stability of the Morris–Lecar system around a fixed point $(V^*, N^*)$, we compute the Jacobian matrix of the vector field. Let $f_1(V, N)$ and $f_2(V, N)$ denote the right-hand sides of the system equations. The Jacobian matrix is defined as:
\begin{equation}
J = 
\begin{bmatrix}
\frac{\partial f_1}{\partial V} & \frac{\partial f_1}{\partial N} \\
\frac{\partial f_2}{\partial V} & \frac{\partial f_2}{\partial N}
\end{bmatrix}.
\label{eq:jacobian_def}
\end{equation}
The dynamical equations are given by:
\[
\begin{aligned}
f_1(V, N) &= \frac{1}{C} \left( -g_{\text{Ca}} M_\infty(V)(V - V_{\text{Ca}}) - g_K N(V - V_K) - g_L(V - V_L) + I \right), \\
f_2(V, N) &= \varphi \, \frac{N_\infty(V) - N}{\tau_N(V)}.
\end{aligned}
\]
We now compute the partial derivatives and the Jacobian becomes:
\begin{equation}
J =
\begin{bmatrix}
\displaystyle \frac{1}{C} \left( -g_{\text{Ca}} M'_\infty(V)(V - V_{\text{Ca}}) - g_{\text{Ca}} M_\infty(V) - g_K N - g_L \right) & \displaystyle \frac{-g_K(V - V_K)}{C} \\[8pt]
\displaystyle \frac{\varphi}{\tau_N(V)} N'_\infty(V) & \displaystyle -\frac{\varphi}{\tau_N(V)}
\end{bmatrix}.
\label{eq:jacobian_final}
\end{equation}

Since the system is planar, the nature of the fixed point is determined by the eigenvalues of $J$, which in turn depend on its trace and determinant. The fixed point is locally asymptotically stable if:
\begin{equation}
\operatorname{Tr}(J) < 0 \quad \text{and} \quad \det(J) > 0.
\end{equation}
\begin{remark}[Remark on fast–slow structure.]
\label{rem1}
Although we do not explicitly employ geometric singular perturbation theory, it is worth noting that the Morris--Lecar model possesses an intrinsic fast–slow structure due to the small parameter $\varphi$ in the recovery variable equation. This timescale separation gives rise to geometric features such as slow manifolds and stiff transients, particularly near bifurcation boundaries. These properties, as studied in classical dynamical systems literature~\cite{fenichel1979,azizi2020,paraskevov2021}, help explain qualitative behaviors such as long quiescent phases or sharp transitions, and partially account for the increased difficulty of learning dynamics in certain regimes using data-driven methods such as PINNs and NODEs.
\end{remark}
\subsection{Physics-Informed Neural Networks}
\label{sec2.2}
A class of deep learning models integrates prior knowledge of physical systems directly into the neural network training process by embedding governing differential equations into the loss function—these models are known as PINNs. Introduced by \cite{raissi2019}, PINNs leverage the structure of the underlying physical laws—typically ordinary or partial differential equations—as soft constraints by incorporating them into the optimization objective. This strategy transforms the learning task from a purely data-fitting problem into a constrained optimization grounded in the governing physics.
Formally, given a system of differential equations with a known structure but potentially unknown solution, a neural network is trained not only to fit observed data but also to minimize the residuals of the governing equations at selected points in the domain. The resulting composite loss function typically takes the form:
\begin{equation}
	L\left(\theta\right)=\left|\left|u_\theta\left(t_i\right)-u\left(t_i\right)\right|\right|^2+\lambda\left|\left|\mathcal{N}\left[u_\theta\left(t\right)\right]\right|\right|^2     
\end{equation}
\begin{center}
\mbox{[data loss]+ [ physics loss]}
\end{center}
where $\mathcal{N}\left[\cdot\right]$ denotes the differential operator representing the governing equations and $\lambda$ is a scalar weighting hyperparameter balancing data fidelity against physical compliance.
 A key innovation of this framework lies in the evaluation of the physics loss via automatic differentiation \cite{raissi2019}, which enables the computation of derivatives of the neural network outputs with respect to their inputs. This mechanism allows PINNs to calculate ODE or PDE residuals even when no explicit data for derivatives is available, offering the ability to interpolate and extrapolate in data-sparse regimes by enforcing physical structure.
Beyond their use in forward problems (e.g., predicting system trajectories), PINNs have been applied to inverse problems, such as estimating unknown parameters embedded in the governing equations. For instance, \cite{tartakovsky2020} demonstrated how physical parameters in subsurface flow could be treated as trainable variables within the network, facilitating simultaneous identification of hidden dynamics and parameters.
Another crucial design element in PINNs is the selection of collocation points—discrete locations in the domain where the residuals of the governing equations are enforced \cite{raissi2019}, \cite{lu2021}. These can coincide with observed data points or be sampled independently, using strategies such as uniform sampling or Latin Hypercube Sampling. Poorly chosen collocation points can lead to slow convergence or inaccurate approximations, highlighting the importance of spatial and temporal coverage.
To address challenges in training stability and convergence, recent work has proposed several improvements. For example, \cite{wang2022} investigated gradient flow pathologies in PINNs and introduced adaptive weighting strategies and curriculum learning methods to better balance data and physics terms throughout training. These approaches help mitigate issues such as stiffness and imbalanced losses that are common in multi-component loss functions.
PINNs have also demonstrated success in biomedical applications. A notable example is the work of \cite{kissas2020}, where PINNs were used to reconstruct arterial blood pressure fields from 4D flow MRI velocity data in cardiovascular modeling. This application highlights the ability of PINNs to integrate sparse, noisy, or incomplete measurements with physically informed constraints to enable robust simulation of physiological systems.
While PINNs offer promising accuracy, generalization, and interpretability, recent comparisons with emerging operator learning methods have illuminated important trade-offs. Methods such as the Fourier Neural Operator \cite{li2021} and DeepONet \cite{lu2021} have shown superior generalization performance across diverse families of parametric PDEs, particularly in high-dimensional settings. However, these methods tend to forgo explicit physical structure in favor of learning mappings between function spaces, often sacrificing interpretability and physical consistency. This contrast emphasizes the ongoing tension between generalization capability and mechanistic fidelity in scientific machine learning.

\subsection{Neural ODEs}\label{sec2.3}
Introduced by Chen in \cite{chen2019}, this framework constitutes a continuous-depth extension of residual neural networks (ResNets), where the transformation of hidden states is governed by a parameterized ordinary differential equation. Instead of stacking discrete layers, NODEs model the evolution of a hidden state as the solution to an ordinary differential equation parameterized by a neural network. This formulation allows NODEs to represent continuous-time dynamics directly and adaptively.
In mathematical terms, the hidden state $h\left(t\right)$ evolves according to:
\begin{equation} 
	\frac{dy}{dt}=f_\theta\left(y\left(t\right),t\right)	
\end{equation}
where ${\frac{dy}{dt}=f}_\theta$ is a neural network with parameters $\theta$, and the output $h\left(t\right)$ obtained via numerical integration using methods such as Runge–Kutta or adaptive solvers. Training is performed by minimizing the mismatch between predicted trajectories and observed data, typically through the adjoint sensitivity method which allows memory-efficient backpropagation through the ODE solver.
NODEs are particularly well-suited for modeling systems where the underlying vector field is unknown or analytically intractable. Their continuous formulation allows smooth interpolation between observations and naturally accommodates irregularly sampled or non-uniform time series.
One of the core strengths of NODEs lies in their flexibility: since the model makes no assumptions about the governing equations of the system, it can be trained entirely from data. This makes NODEs especially attractive for black-box modeling of dynamical systems. However, the lack of embedded physical structure can also be a limitation. NODEs may struggle with extrapolation beyond the training domain and often require dense, low-noise data to achieve accurate results. 
Several extensions have been proposed to improve the expressiveness and robustness of NODEs. Augmented NODEs \cite{dupont2019} expand the dimensionality of the hidden state to capture more complex dynamics, while Continuous Normalizing Flows (CNFs) adapt the NODE framework for density estimation and generative modeling. Another notable variant, ANODEV2 \cite{gholami2021}, introduces architectural refinements and coupling mechanisms to address stiffness and gradient instability during training.
Despite these improvements, training NODEs remains computationally expensive due to the repeated need to solve differential equations during optimization. The adjoint method, although efficient in memory usage, can introduce numerical challenges, particularly in stiff regimes.
Overall, NODEs offer a powerful and flexible framework for learning continuous-time dynamics directly from data, especially in domains where physical models are unknown or difficult to specify. Their modular structure and compatibility with modern deep learning tools have made them popular across a range of applications, including neuroscience, finance, and climate modeling.
\section{Methodology}
\label{sec3}
This section outlines the experimental design used to compare PINNs and NODEs on the Morris–Lecar neuronal model. We begin by motivating the selection of the Morris–Lecar system as a benchmark, followed by a description of the dynamical regimes explored in this study and the procedure for generating synthetic training data. We evaluate the predictive performance of the models using standard regression metrics, including the Mean Squared Error (MSE), Mean Absolute Error (MAE), Root Mean Squared Error (RMSE), the coefficient of determination ($R^2$), and others. A detailed description and the mathematical definitions of these metrics are provided in Section~\ref{sec:metrics}.

\subsection{Rationale for Model Selection}
\label{sec3.1}
The Morris–Lecar model provides an ideal testbed for comparing data-driven and physics-informed neural modeling approaches due to its biophysical interpretability, well-established mathematical formulation, and capacity to exhibit a rich set of nonlinear behaviors. Table~\ref{tab:just} summarizes the main characteristics of the model and their relevance to the PINN vs. NODE comparison.
\begin{table}[!htb]
\caption{Justification for using the Morris--Lecar model as a benchmark in the comparison of PINNs and NODEs.}
\label{tab:just}
\begin{tabularx}{\textwidth}{@{}lX X@{}}
\toprule
\textbf{Criterion} & \textbf{Morris--Lecar Model Characteristics} & \textbf{Relevance to PINNs \& NODEs} \\
\midrule
Known physical equations        & Analytically defined ODE system      & Enables direct embedding of physics in PINNs; NODEs learn dynamics from data \\
Nonlinear dynamics              & Exhibits excitability and oscillations     & Tests the models' ability to capture bifurcation behavior \\
Ground truth availability       & Numerical integration feasible       & Allows quantitative performance evaluation (Subsection~\ref{sec:metrics}) \\
Low-dimensional yet rich        & Two-variable system with complex transitions & Suitable for compact yet expressive architectures \\
Biophysical interpretability    & Neurophysiologically meaningful parameters & PINNs offer insight; NODEs assess black-box adaptability \\
\bottomrule
\end{tabularx}
\end{table}
\subsection{Dynamical Regimes of the Morris–Lecar Model}
\label{sec3.2}
In this study, we examine three distinct parameter configurations of the Morris–Lecar model, each representing a different type of neuronal excitability: Hopf Bifurcation, Snlc and Homoclinic Orbit. These configurations are drawn from the canonical formulations described in “Mathematical Foundations of Neuroscience” by Ermentrout and Terman in \cite{ermentrout2010} and Azzizi et. al \cite{azizi2020}, where the Morris–Lecar model is used to explore bifurcation structures that give rise to different classes of neural responses.
By using these three well-established regimes, we aim to assess how well PINNs and NODEs can learn or reconstruct the underlying system dynamics under qualitatively different behaviors—ranging from sharp all-or-none firing to subthreshold oscillations
\subsection{Data Generation and Regime Characterization}
\label{sec3.3}
To evaluate the modeling capabilities of PINNs and NODEs, we generated synthetic datasets based on numerical simulations of the Morris–Lecar system. The simulations were performed using a custom Python script (\texttt{synthetic\_M\_L\_data.py}) that numerically integrates the system's equations using the \texttt{solve\_ivp} function from \texttt{scipy.integrate}. For each simulation, we collected trajectories of the membrane potential \( V(t) \) and the recovery variable \( N(t) \) over the time interval \( t \in [0,\ 300] \,\text{ms} \), using 3000 uniformly spaced time points. These trajectories form the ground truth used for training and evaluating the models.

Simulations were conducted for the three bifurcation regimes, using parameter values listed in Table~\ref{tab:parameter}, adapted from \cite{azizi2020}. For each regime, we performed a current sweep over the interval \( I_{\text{ext}} \in [0, 120]\,\mu\text{A}/\text{cm}^2 \). The resulting peak-to-peak voltage amplitudes were used to construct bifurcation diagrams, shown in Figure~\ref{fig1}, which guided the selection of representative current values for training and testing.

\begin{table}[!htb]
\caption{Parameter values for the Morris–Lecar model under three canonical bifurcation regimes (\cite{azizi2020}).}
\label{tab:parameter}
\centering
\begin{tabular}{@{}llll@{}}
\toprule
\textbf{Parameter} & \textbf{Hopf} & \textbf{SNLC} & \textbf{Homoclinic} \\
\midrule
$\varphi$    & 0.04  & 0.067 & 0.23 \\
$g_{Ca}$  & 4.4   & 4     & 4 \\
$V_3$     & 2     & 12    & 12 \\
$V_4$     & 30    & 17.4  & 17.4 \\
$E_{Ca}$  & 120   & 120   & 120 \\
$E_K$     & -84   & -84   & -84 \\
$E_L$     & -60   & -60   & -60 \\
$g_K$     & 8     & 8     & 8 \\
$g_L$     & 2     & 2     & 2 \\
$V_1$     & -1.2  & -1.2  & -1.2 \\
$V_2$     & 18    & 18    & 18 \\
$C_M$     & 20    & 20    & 20 \\
\bottomrule
\end{tabular}
\end{table}

\begin{figure}[!htbp]
    \centering
    \begin{subfigure}[t]{0.48\textwidth}
        \centering
        \includegraphics[width=\textwidth]{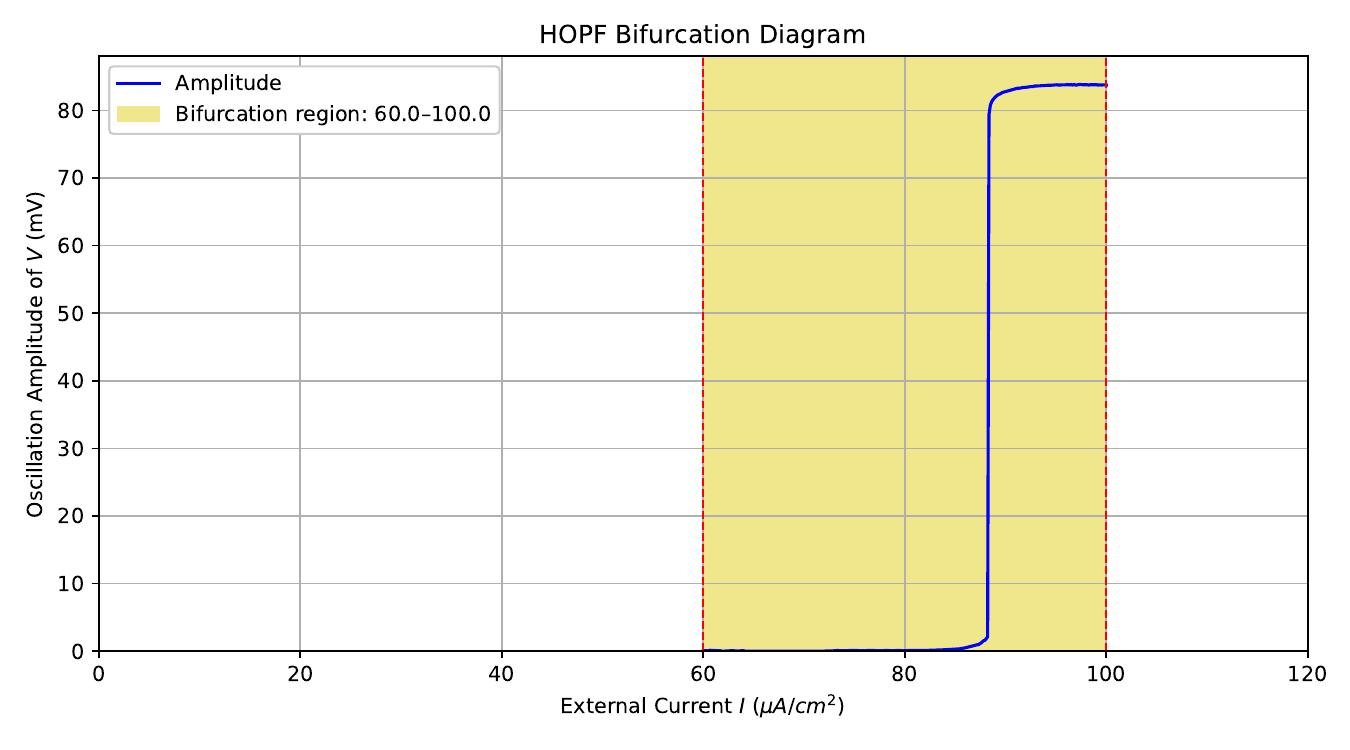}
        \caption{Hopf regime.}
    \end{subfigure}
    \hfill
    \begin{subfigure}[t]{0.48\textwidth}
        \centering
        \includegraphics[width=\textwidth]{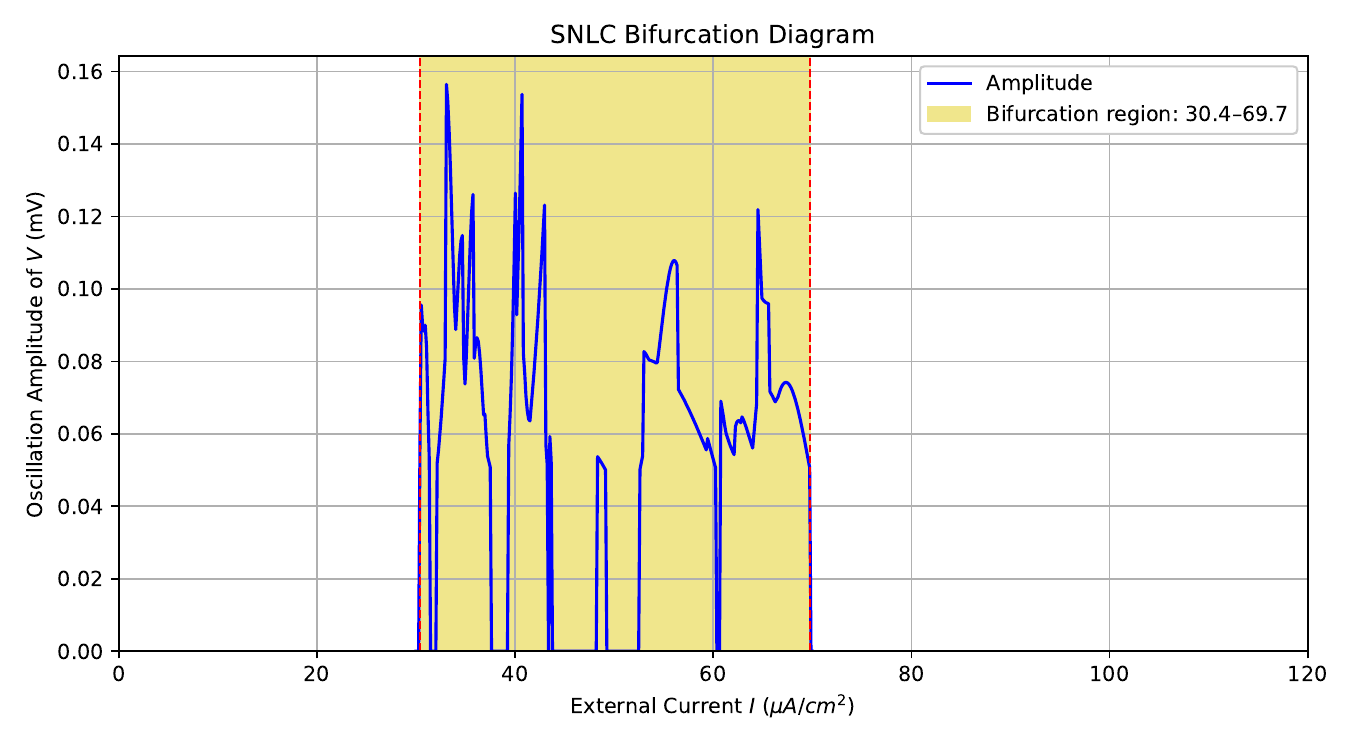}
        \caption{SNLC regime.}
    \end{subfigure}
    \begin{subfigure}[t]{0.48\textwidth}
        \centering
        \includegraphics[width=\textwidth]{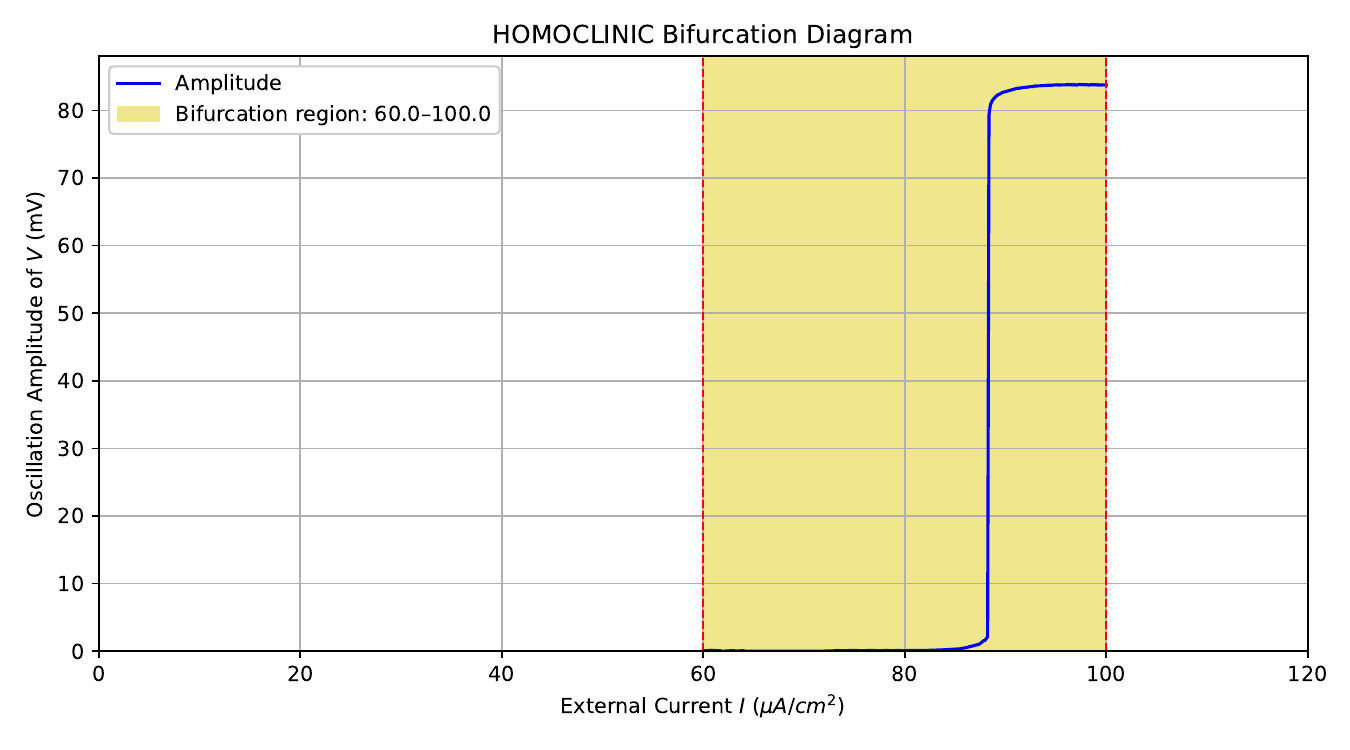}
        \caption{Homoclinic regime.}
    \end{subfigure}
		\caption{Bifurcation diagrams of the Morris–Lecar system under the three regimes defined in Table~\ref{tab:parameter}, obtained by sweeping $I_{\text{ext}}$ over the range $[0,\ 120]\,\mu\text{A}/\text{cm}^2$.}
    \label{fig1}
\end{figure}

Based on these diagrams, we selected representative current values within the bifurcation regions to highlight the qualitative transitions in the system’s dynamics. Specifically, we used \( I_{\text{ext}} = 90\,\mu\text{A}/\text{cm}^2 \) for the Hopf regime, \( 42\,\mu\text{A}/\text{cm}^2 \) for the SNLC regime, and \( 50\,\mu\text{A}/\text{cm}^2 \) for the Homoclinic regime. These trajectories serve as the input data for downstream training.

Figure~\ref{fig2} presents the corresponding phase portraits and equilibrium structures, while Table~\ref{tab:eq_analysis} summarizes the fixed points, Jacobians, and eigenvalues, offering additional insight into local stability.

\begin{figure}[!htbp]
    \centering
    \begin{subfigure}[t]{0.48\textwidth}
        \centering
        \includegraphics[width=\textwidth]{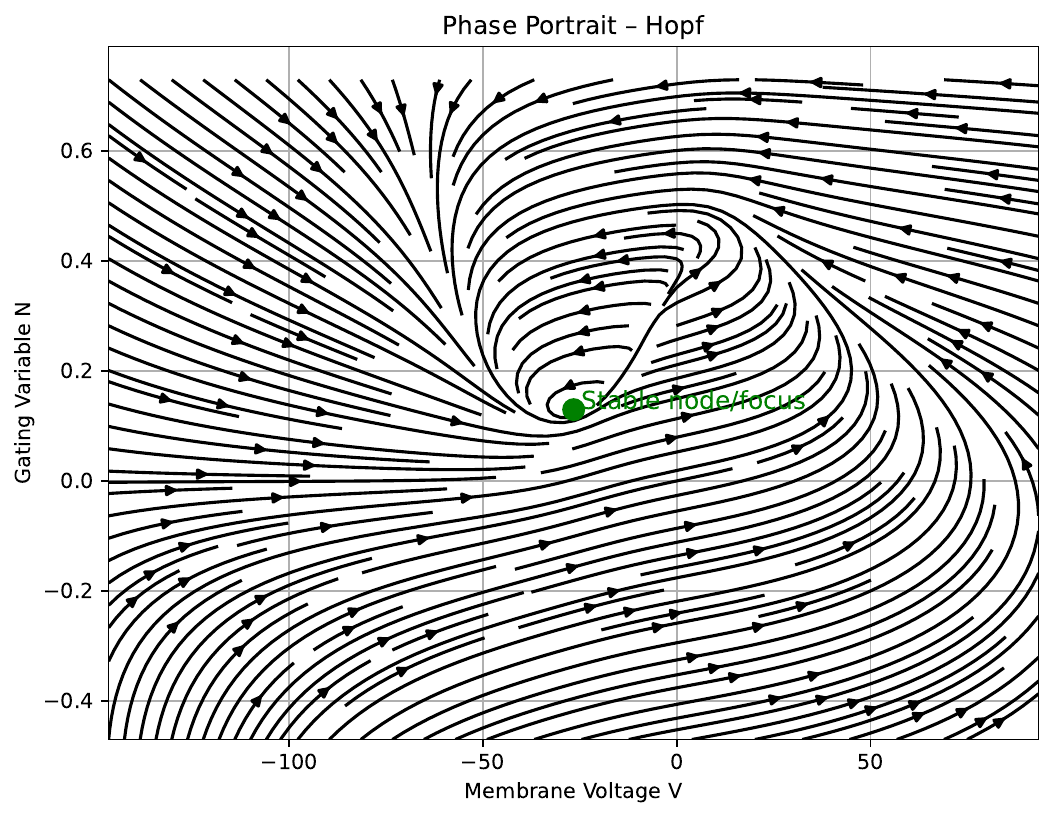}
        \caption{Hopf regime.}
    \end{subfigure}
    \hfill
    \begin{subfigure}[t]{0.48\textwidth}
        \centering
        \includegraphics[width=\textwidth]{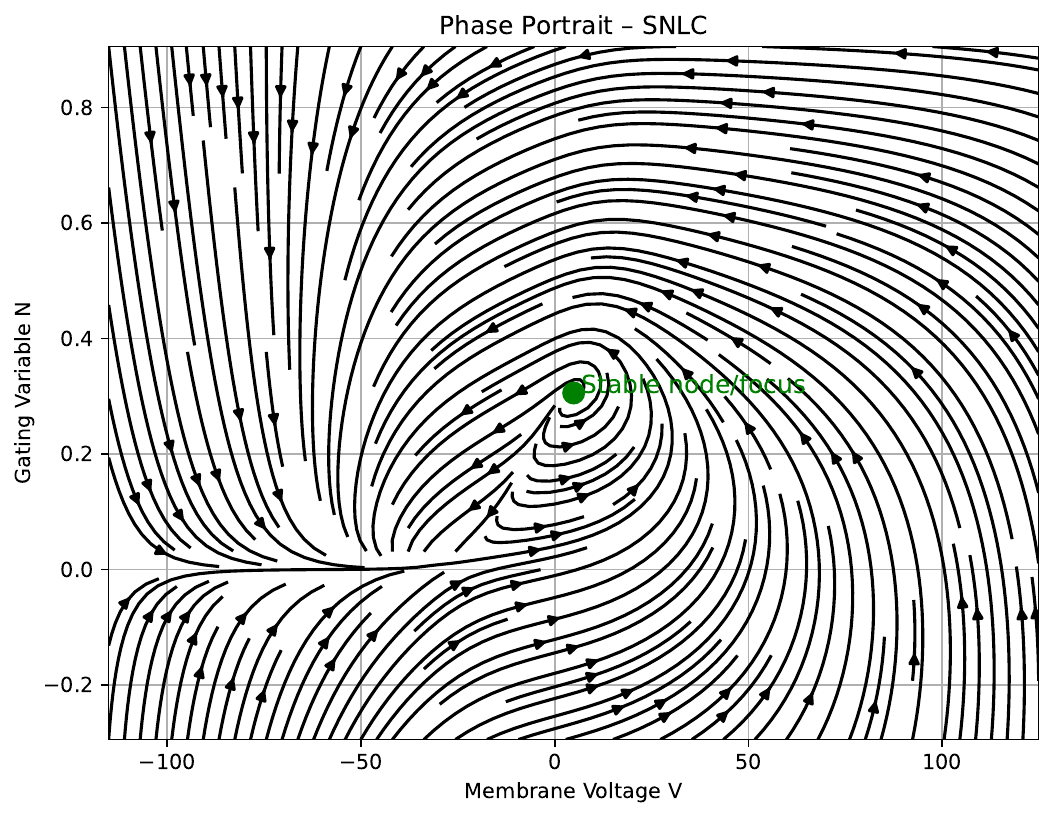}
        \caption{SNLC regime.}
    \end{subfigure}
    \begin{subfigure}[t]{0.48\textwidth}
        \centering
        \includegraphics[width=\textwidth]{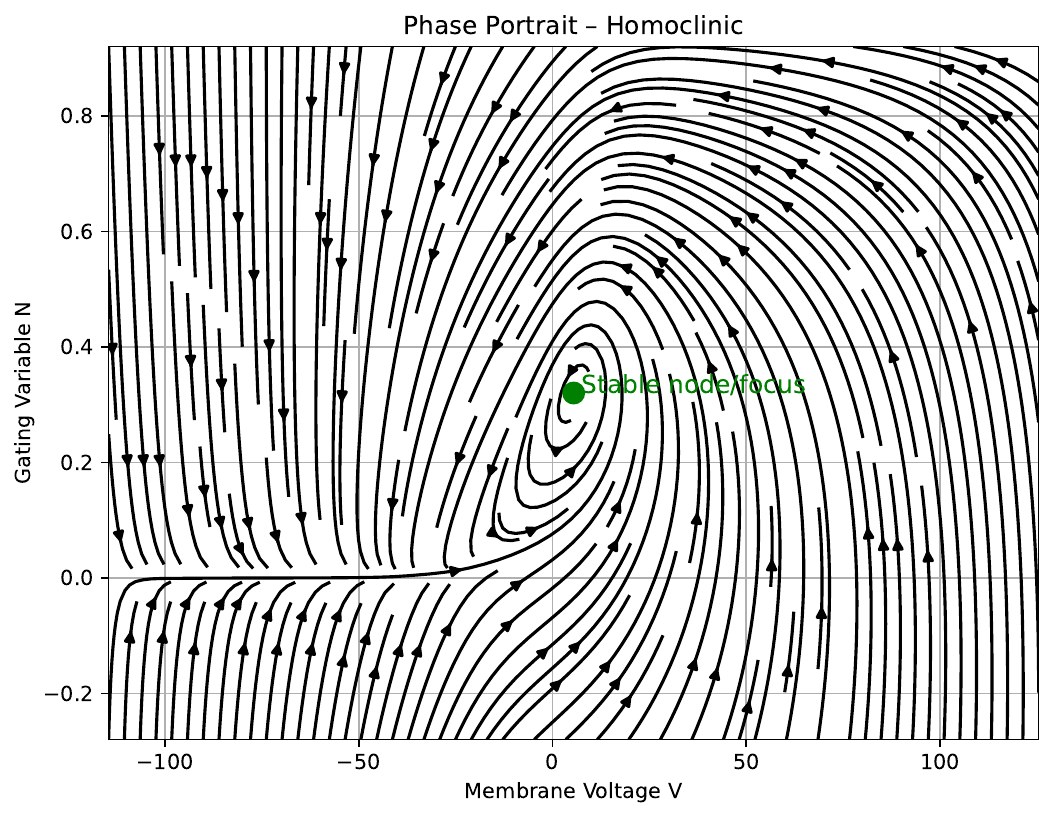}
        \caption{Homoclinic regime.}
    \end{subfigure}
    \caption{Equilibrium analysis and phase portraits for each bifurcation regime under selected representative currents.}
    \label{fig2}
\end{figure}

\begin{table}[!htbp]
\centering
\caption{Equilibrium properties of the Morris–Lecar system under representative currents.}
\label{tab:eq_analysis}
\renewcommand{\arraystretch}{1.3}
\begin{tabular}{lccc}
\toprule
\textbf{Scenario} & \textbf{Homoclinic} & \textbf{SNLC} & \textbf{Hopf} \\
\midrule
Equilibrium Point $(V^*, N^*)$ &
(5.4540, 0.3203) &
(4.8622, 0.3057) &
(-26.5969, 0.1294) \\
Jacobian &
$\begin{bmatrix}
-35.588 & -35.782 \\
0.005655 & -0.2341
\end{bmatrix}$ &
$\begin{bmatrix}
-35.327 & -35.545 \\
0.001601 & -0.0684
\end{bmatrix}$ &
$\begin{bmatrix}
-22.935 & -22.961 \\
0.000269 & -0.0446
\end{bmatrix}$ \\
Eigenvalues &
$-35.583,\ -0.240$ &
$-35.326,\ -0.070$ &
$-22.935,\ -0.045$ \\
Type &
Stable node/focus &
Stable node/focus &
Stable node/focus \\
\bottomrule
\end{tabular}
\end{table}
\clearpage
\subsection{Implementation of PINN}\label{sec3.4}

Building on the theoretical overview of PINNs presented in Section~\ref{sec2.2}, we implemented the Morris–Lecar model using the framework proposed by Raissi et al.~\cite{raissi2019}. The model approximates the system’s dynamics by minimizing a composite loss that balances data fidelity and compliance with the governing differential equations.

The approximation is carried out using a fully connected neural network in \texttt{PyTorch}~\cite{paszke2019}. The input is a scalar time variable $t$, and the outputs are predictions for the system's state variables: $\hat{\mathbf{y}}_\theta(t) = \left[ \hat{V}_\theta(t), \hat{N}_\theta(t) \right].$
The network architecture consists of three hidden layers with 128 neurons and \texttt{Tanh} activations, followed by a linear output layer. This configuration provides sufficient capacity for learning smooth time-dependent functions~\cite{hornik1989}. The total number of trainable parameters is $33,538$, and a single forward pass requires approximately $33,152$ floating points operations (FLOPs), estimated using the \texttt{THOP} library.

The key feature of the PINN approach is the incorporation of the system's physics into the training process. The derivatives $\frac{d\hat{V}}{dt}$ and $\frac{d\hat{N}}{dt}$ are computed using automatic differentiation (\texttt{torch.autograd.grad}) and are substituted into the differential equations to form the residual terms:
\begin{equation}
\begin{aligned}
f_1(t) &= C \frac{d\hat{V}}{dt} + g_{\text{Ca}} M_\infty(\hat{V})(\hat{V} - V_{\text{Ca}}) + g_K \hat{N} (\hat{V} - V_K) + g_L (\hat{V} - V_L) - I, \\
f_2(t) &= \frac{d\hat{N}}{dt} - \varphi \cdot \frac{N_\infty(\hat{V}) - \hat{N}}{\tau_N(\hat{V})},
\end{aligned}
\end{equation}
where $M_\infty, N_\infty$, and $\tau_N$ are voltage-dependent nonlinear functions defined via hyperbolic expressions.

The loss function is defined as the unweighted sum of a data loss (mean squared error between predicted and reference trajectories) and a residual loss derived from the above equations. Gradients are computed with respect to uniformly sampled time values $t$ with \texttt{requires\_grad=True}, enabling full backpropagation through the computational graph. The full implementation was developed in the pinn.py script, which contains the training, evaluation, and visualization routines for the PINN model.
\subsection{Implementation of NODE}\label{sec3.5}

To simulate the dynamics of the Morris--Lecar system using NODEs (\ref{sec2.2}), we implemented a scenario-independent framework in \texttt{PyTorch}~\cite{paszke2019}, utilizing the \texttt{torchdiffeq} library~\cite{chen2018b}. This framework learns the vector field of the underlying dynamical system directly from time series data, without requiring any explicit knowledge of the governing equations. The same implementation is applied across all bifurcation regimes (Hopf, SNLC, Homoclinic), enabling consistent evaluation of the method's ability to capture nonlinear behavior.

At the core of the NODE model lies a parameterized function $f_\theta(y)$, where $y = [V, N]$ denotes the system's state vector. The function $f_\theta$ is approximated by a feedforward neural network comprising three hidden layers of 128 neurons each with \texttt{Tanh} activations and a final output layer of two neurons representing the learned derivatives $\frac{dV}{dt}$ and $\frac{dN}{dt}$. This architecture was selected to balance expressiveness with computational efficiency~\cite{rackauckas2021}.

Model integration is performed using the Dormand--Prince 5\textsuperscript{th}-order adaptive solver (\texttt{method='dopri5'}) as provided by the \texttt{odeint} function in the \texttt{torchdiffeq} library. During training, the neural network is queried at the initial condition and integrated over the normalized time domain. The resulting trajectory is compared with ground truth data using the MSE loss function.

In each experiment, inverse normalization is applied to restore predictions to their original scale, and post-training analysis includes the generation of $V(t)$ and $N(t)$ time series plots, phase portraits, and loss curves. The number of trainable parameters and the number of floating-point operations (FLOPs) are also computed using the \texttt{THOP} library. All outputs, including trained model weights and performance metrics, are systematically saved in dedicated folders to ensure reproducibility. The full implementation was developed in the node.py script, which contains the training, evaluation, and visualization routines for the NODE model.

\subsection{Model Architecture and Experimental Setup }\label{sec3.6}	
To ensure a fair and systematic comparison, both the NODE and PINN models were trained on the same synthetic dataset generated from the 2D Morris--Lecar system. The dataset consists of $3000$ time points over the interval $t \in [0, 300]$ ms, with a step size of approximately $1$ ms. Each data point contains the membrane potential $V(t)$ and the recovery variable $N(t)$.

Both models were implemented in \texttt{PyTorch}~\cite{paszke2019}. The NODE implementation uses the \texttt{torchdiffeq} library~\cite{chen2018b} to solve ODEs via numerical integration, while the PINN approach leverages automatic differentiation (\texttt{torch.autograd.grad}) to enforce physics-based constraints through residual terms derived from the governing equations.

The models share a common architecture: fully connected feedforward networks with three hidden layers of $128$ neurons and \texttt{Tanh} activation functions. The NODE model receives the initial condition $[V_0, N_0]$ and integrates over time, while the PINN model uses scalar time input $t$ to predict $[\hat{V}(t), \hat{N}(t)]$ directly.

Normalization strategies differ. The NODE pipeline applies Min--Max normalization to $V$, $N$, and $t$, with inverse transformation after prediction. The PINN framework omits normalization to preserve the physical consistency of the residual terms.

Training is performed with the Adam optimizer and learning rate $10^{-3}$, across five epoch settings: $1000$, $2000$, $5000$, $10000$, and $20000$. The optimal model for each scenario is selected based on the minimal value of $\text{MSE}_V + \text{MSE}_N$. Computations were executed on an Apple M4 CPU with 32-bit floating-point precision. To ensure reproducibility, both pipelines enforce fixed seeds and deterministic algorithms using \texttt{torch.use\_deterministic\_algorithms}.

Evaluation metrics include MSE, RMSE, MAE, $R^2$, RMSPE, MAPE, and Max Error, computed separately for $V(t)$ and $N(t)$. Results include time-series plots, phase portraits, training loss curves, parameter counts, and FLOPs (via \texttt{THOP}). Outputs are organized into structured directories and stored in \texttt{.pth}, \texttt{.csv}, and \texttt{} formats.
In Table~\ref{tab:differences}, we present a summary of the methodological differences between PINNs and NODEs as implemented in this study.

\begin{table}[!htb]
\caption{Summary of Methodological Differences between PINNs and NODEs}
\label{tab:differences}
\begin{tabularx}{\textwidth}{@{}lX X X@{}}
\toprule
\textbf{Aspect} & \textbf{PINN} & \textbf{Neural ODE} \\
\midrule
Input–Output Structure & $t \rightarrow [V, N]$ & $[V_0, N_0] \rightarrow [V(t), N(t)]$ via ODE integration \\
Architecture           & FCNN (3 hidden layers × 128, Tanh) & FCNN (3 hidden layers × 128, Tanh or SiLU (see Remark~\ref{rem:SiLU})) \\
Physics Integration    & Explicit residuals via autograd     & None (data-driven only) \\
Normalization          & No normalization                   & Min–Max normalization with inverse scaling (see Remark~\ref{rem:minmax}) \\
Optimizer              & Adam (lr = $10^{-3}$)               & Adam (lr = $10^{-3}$) \\
Numerical Precision    & \texttt{float32}                    & \texttt{float32} \\
ODE Solver             & Not applicable                      & \texttt{odeint} (dopri5 integrator) \\
Parameter Count        & 33,538                         & 33,666 \\
FLOPs per Inference    & $\sim$33,152                       & $\sim$33,280 \\
Epochs Trained         & 1000 to 20000                       & 1000 to 20000 \\
Determinism            & Enabled & Enabled  \\
Evaluation Metrics     & MSE, RMSE, MAE, $R^2$, MAPE, RMSPE, Max Error & Same \\
\bottomrule
\end{tabularx}
\end{table}

\begin{remark}
\label{rem:minmax}
In the NODE implementation, we apply Min–Max normalization to $V$, $N$, and $t$ before training, mapping all values to $[0,1]$. This improves numerical stability and gradient propagation. After training, inverse normalization restores predictions to their physical scale. In contrast, PINN models operate on raw (unscaled) data to preserve the structure of the residuals derived from the governing differential equations. Normalizing $V$ would distort terms such as $M_\infty(V)$ and $\tau_N(V)$, compromising the model's physical interpretability.
\end{remark}

\begin{remark}
\label{rem:SiLU}
Following established practices in the literature, we use the hyperbolic tangent (Tanh) as the activation function in both PINN and Neural ODE architectures. Tanh has been widely adopted in PINNs due to its smoothness and ability to approximate continuous solutions effectively~\cite{jagtap2020}. Similarly, the original Neural ODE framework~\cite{chen2019} employed Tanh as its default choice. However, in the context of Neural ODEs, we also investigate the Sigmoid Linear Unit (SiLU or Swish), a smooth and non-monotonic activation that has demonstrated superior performance in deep learning models~\cite{ramachandran2017}. 
\end{remark}
To ensure a consistent and interpretable comparison between the two modeling approaches, we adopt a standard suite of regression-based metrics. These metrics, defined in the following subsection, allow us to evaluate the accuracy and generalization ability of each model across different dynamical regimes.

\subsection{Evaluation Metrics} \label{sec:metrics}

To systematically quantify model performance, we adopt standard regression-based metrics widely used in the evaluation of neural network surrogates for dynamical systems. These metrics offer complementary views on model accuracy, scale sensitivity, and generalization ability. In the context of biophysical systems like the Morris--Lecar model, it is essential to distinguish between absolute and relative errors, as well as to assess both average and worst-case deviations.

All metrics are computed between the predicted values $\hat{y}_i$ and the ground-truth targets $y_i$. For clarity, we report each metric separately for the membrane voltage $V(t)$ and the gating variable $N(t)$, using subscripts: e.g., $\mathrm{MSE}_V$, $\mathrm{MAPE}_N$, etc.

\begin{itemize}
    \item Mean Squared Error (MSE):
    \[
    \mathrm{MSE} = \frac{1}{n} \sum_{i=1}^{n} (y_i - \hat{y}_i)^2
    \]
    Captures the average squared deviation; sensitive to large errors.

    \item  Root Mean Squared Error (RMSE):
    \[
    \mathrm{RMSE} = \sqrt{\mathrm{MSE}}
    \]
    Expresses the error in the same units as the signal, facilitating physical interpretation.

    \item Mean Absolute Percentage Error (MAPE):
    \[
    \mathrm{MAPE} = \frac{100\%}{n} \sum_{i=1}^{n} \left| \frac{y_i - \hat{y}_i}{y_i} \right|
    \]
    A relative metric that expresses average error as a percentage. It may become unstable when $y_i \approx 0$.

    \item Coefficient of Determination ($R^2$):
    \[
    R^2 = 1 - \frac{\sum_{i=1}^{n}(y_i - \hat{y}_i)^2}{\sum_{i=1}^{n}(y_i - \bar{y})^2}
    \]
    Indicates the proportion of variance in the reference signal explained by the model. Values close to 1 denote high fidelity.

    \item Maximum Absolute Error (MaxErr):
    \[
    \mathrm{MaxErr} = \max_{i} |y_i - \hat{y}_i|
    \]
    Measures the worst-case absolute deviation across the predicted trajectory.

    \item Root Mean Squared Percentage Error (RMSPE):
    \[
    \mathrm{RMSPE} = \sqrt{ \frac{1}{n} \sum_{i=1}^{n} \left( \frac{y_i - \hat{y}_i}{y_i} \right)^2 }
    \]
    A scale-invariant metric that penalizes relative deviations; more robust than MAPE in certain regimes.
\end{itemize}

\section{Results and Comparison}\label{sec4}
In this section, we present the comparative performance of the PINNs and NODEs architectures in capturing the dynamics of the Morris–Lecar model under varying bifurcation scenarios. The evaluation focuses on both qualitative reproduction of voltage traces and quantitative error metrics.
\subsection{Qualitative Analysis of Model Predictions}\label{sec4.1}
To assess the ability of PINNs and NODEs to capture the dynamics of the Morris–Lecar model across different regimes, we evaluated the performance of both models on synthetic time series generated for representative values of external current $I$. 
In accordance with the existence of equilibria established in Proposition~\ref{prop2.1}, the Morris--Lecar system admits at least one fixed point for any biologically relevant set of parameters and for all values of the external current $I$. This theoretical guarantee provides a foundation for evaluating how well the PINN and NODE models capture the system’s behavior across different dynamic regimes. In particular, the ability of each model to converge to a stable resting state in the subcritical regime (e.g., $I = 50~\mu\text{A}/\text{cm}^2$ for the Hopf scenario), or to reproduce limit cycle oscillations beyond the bifurcation threshold (e.g., $I = 90~\mu\text{A}/\text{cm}^2$), serves as a direct indicator of their dynamic fidelity.

The results are summarized in Figures \ref{fig:hoph50}-\ref{fig:homoclinic_node}, which illustrate the predicted membrane potential $V(t)$ and recovery variable $N(t)$ compared to the ground truth solutions obtained via numerical integration.
In regimes where no bifurcation is present—such as for $I=50\mu A/cm^2$ under the Hopf parameters (Figure \ref{fig:hoph50})—both models rapidly converge to accurate approximations of the system's dynamics. Even with a relatively small number of training epochs, PINNS and NODE successfully reproduce the stable fixed-point behavior of the model.
\begin{figure}[!htbp]
    \centering
    \begin{minipage}[t]{0.48\textwidth}
        \centering
        \includegraphics[width=\textwidth]{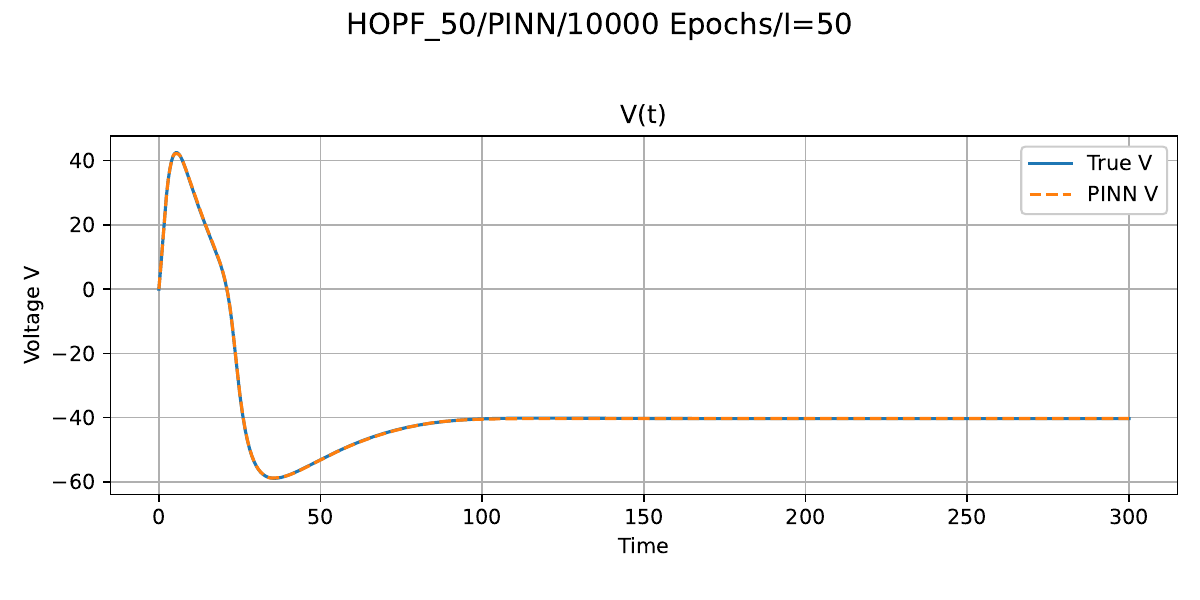}\\
            \end{minipage}
    \hfill
    \begin{minipage}[t]{0.48\textwidth}
        \centering
        \includegraphics[width=\textwidth]{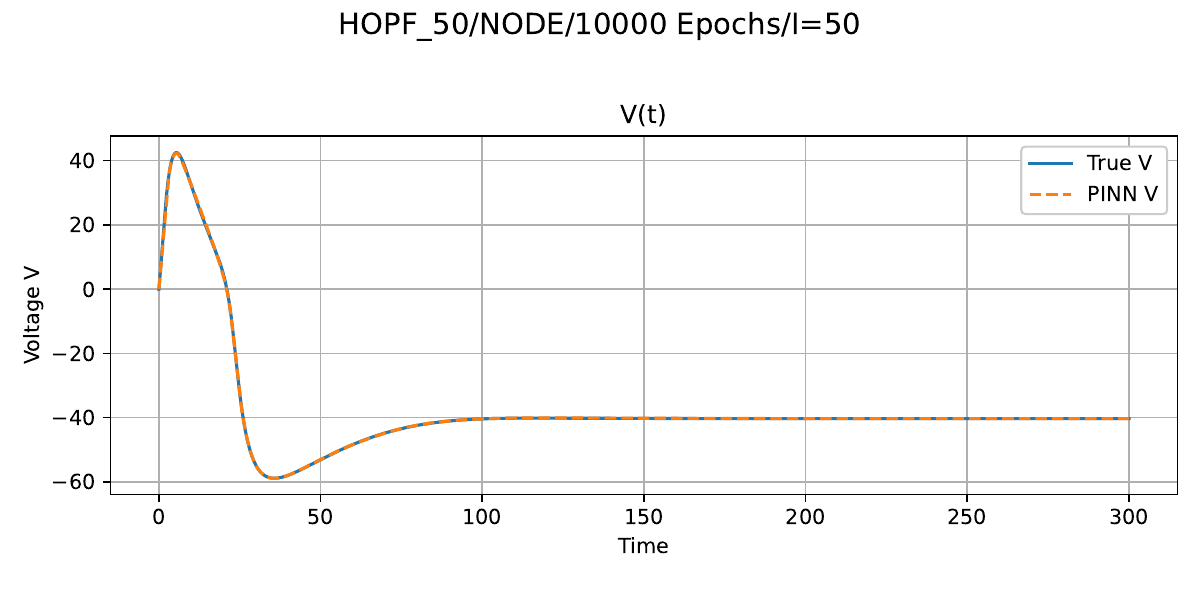}\\
     \end{minipage}
\caption{Predicted vs ground truth voltage trajectories and phase portraits for the Hopf regime with $I_{ext}=50\mu A/cm^2$, a current value that does not induce bifurcation. Both PINN and NODE models successfully reproduce the stable resting dynamics.}
    \label{fig:hoph50}
\end{figure}

In the Hoph bifurcating case of $I_{ext}=90 \mu A/cm^2$, both the PINN and NODE models are able to capture the sustained oscillations characteristic of the Hopf regime. The predictions align well with the true dynamics in both the time and phase space, indicating that—given sufficient training epochs—both models can internalize and generalize the underlying periodic behavior. In Figure \ref{fig:hoph_pinn} demonstrate that in the Hopf regime, the PINN model successfully captures the system's dynamics.
Figure \ref{fig:hoph_node} illustrates the training performance of the NODE model in the bifurcating Hopf dynamics scenario. The loss decreases steadily, suggesting that the model is not only learning the observed data but also incorporating the underlying physical laws embedded in the training objective. This reinforces the effectiveness of the PINN framework in scenarios where the dynamics are governed by known equations.PINN exhibits smooth convergence toward a low training error, effectively capturing the system’s oscillatory dynamics
\begin{figure}[!htbp]
    \centering
    \begin{minipage}[t]{0.48\textwidth}
        \centering
        \includegraphics[width=\textwidth]{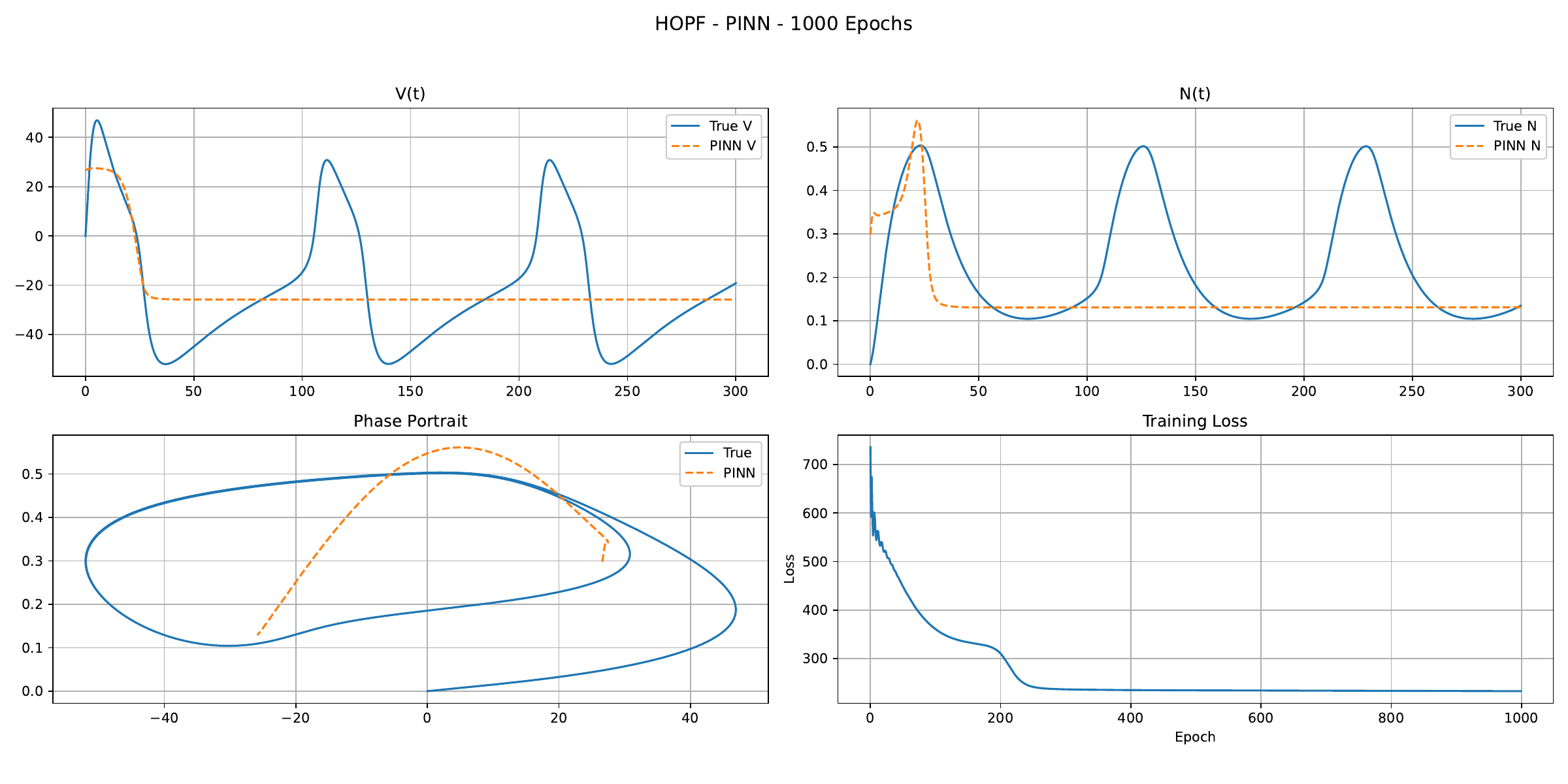}\\
          \end{minipage}
    \hfill
    \begin{minipage}[t]{0.48\textwidth}
        \centering
        \includegraphics[width=\textwidth]{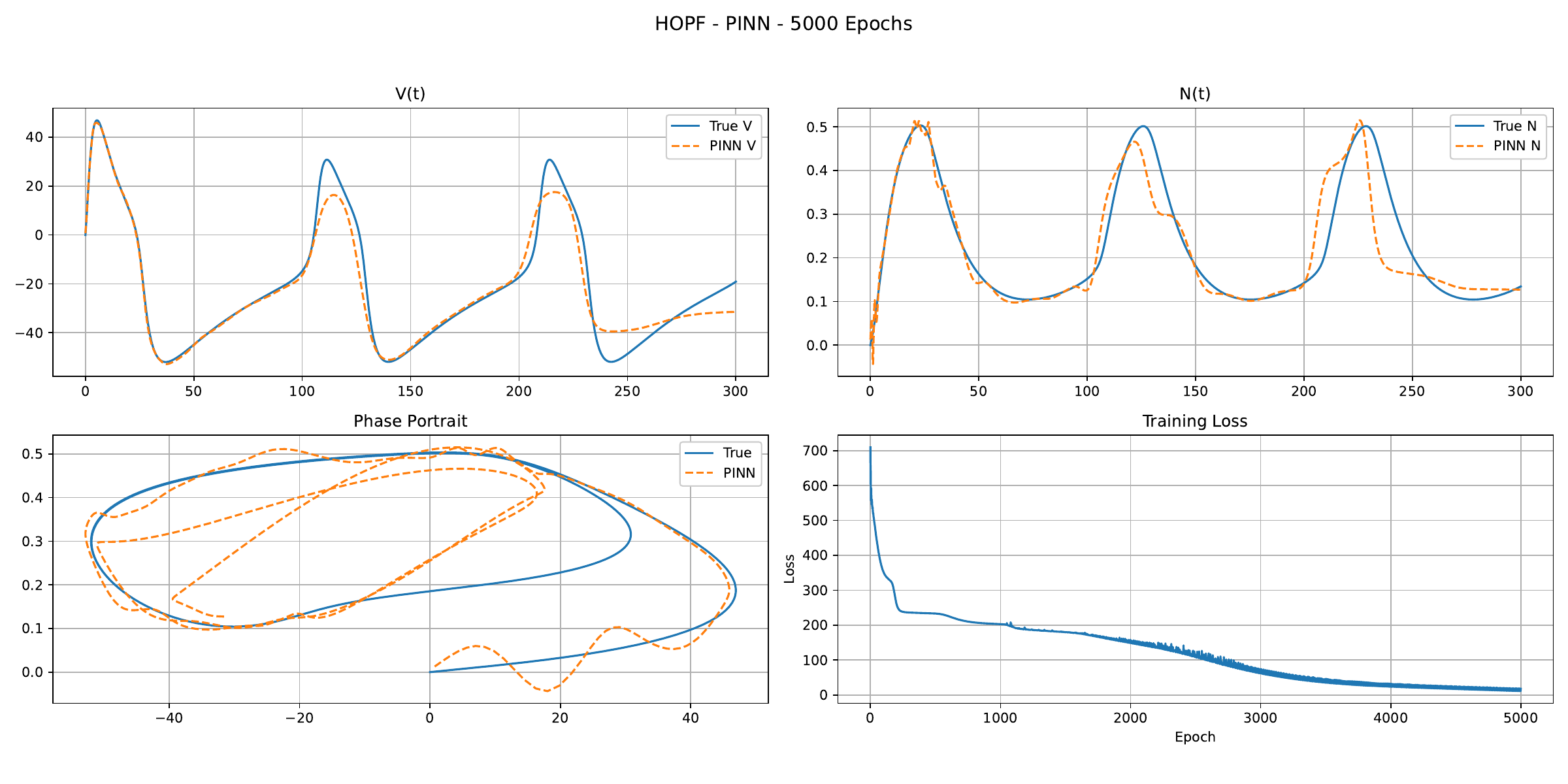}\\
    \end{minipage}
    \begin{minipage}[t]{0.48\textwidth}
        \centering
        \includegraphics[width=\textwidth]{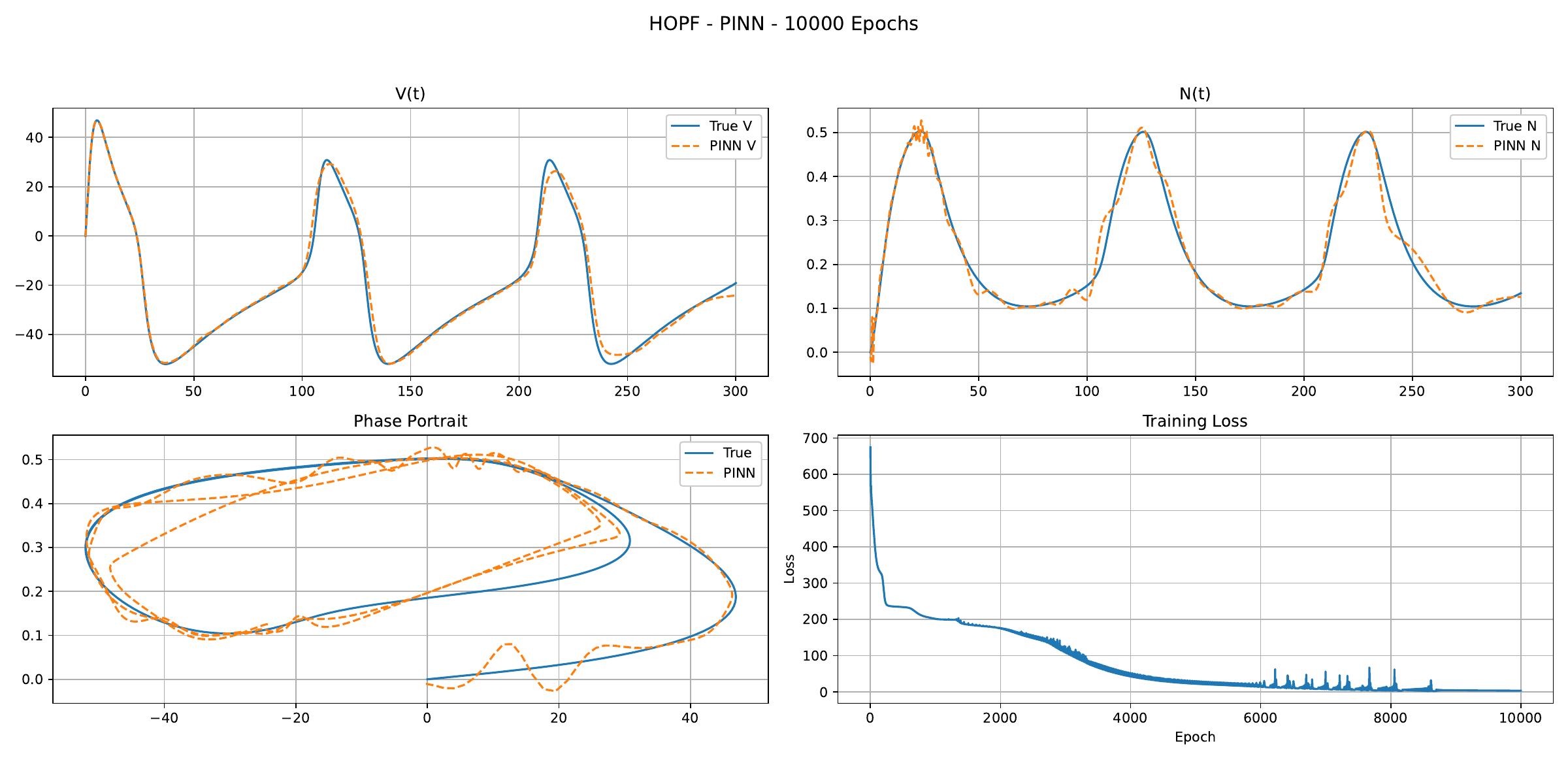}\\
      \end{minipage}
    \hfill
    \begin{minipage}[t]{0.48\textwidth}
        \centering
        \includegraphics[width=\textwidth]{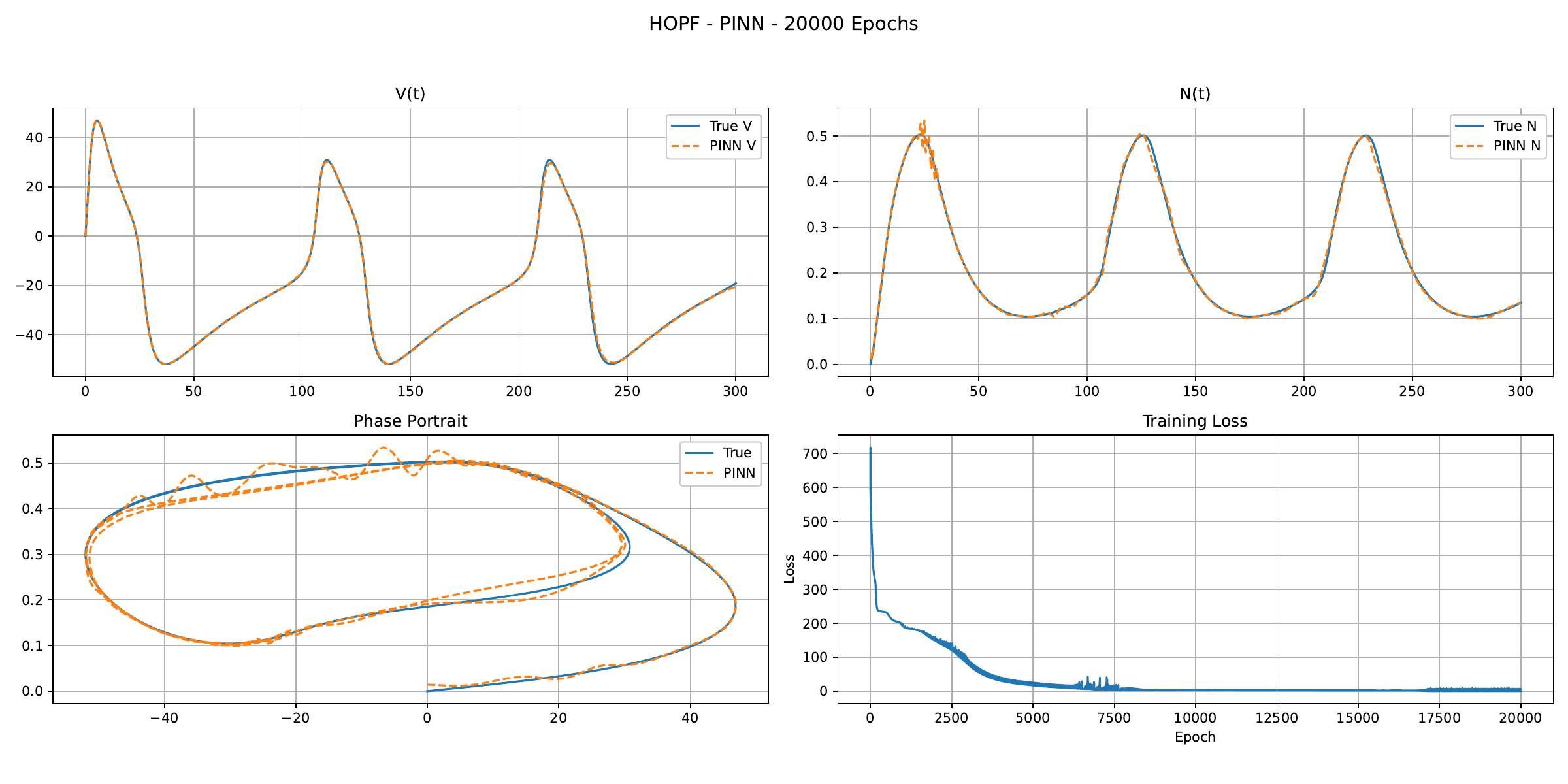}\\
    \end{minipage}
    \caption{PINN predicted vs ground truth trajectories under the Hopf regime for with $I_{ext}=90\mu A/cm^2$ across different training epochs.}
    \label{fig:hoph_pinn}
\end{figure}

\begin{figure}[!htbp]
    \centering
    \begin{minipage}[t]{0.48\textwidth}
        \centering
        \includegraphics[width=\textwidth]{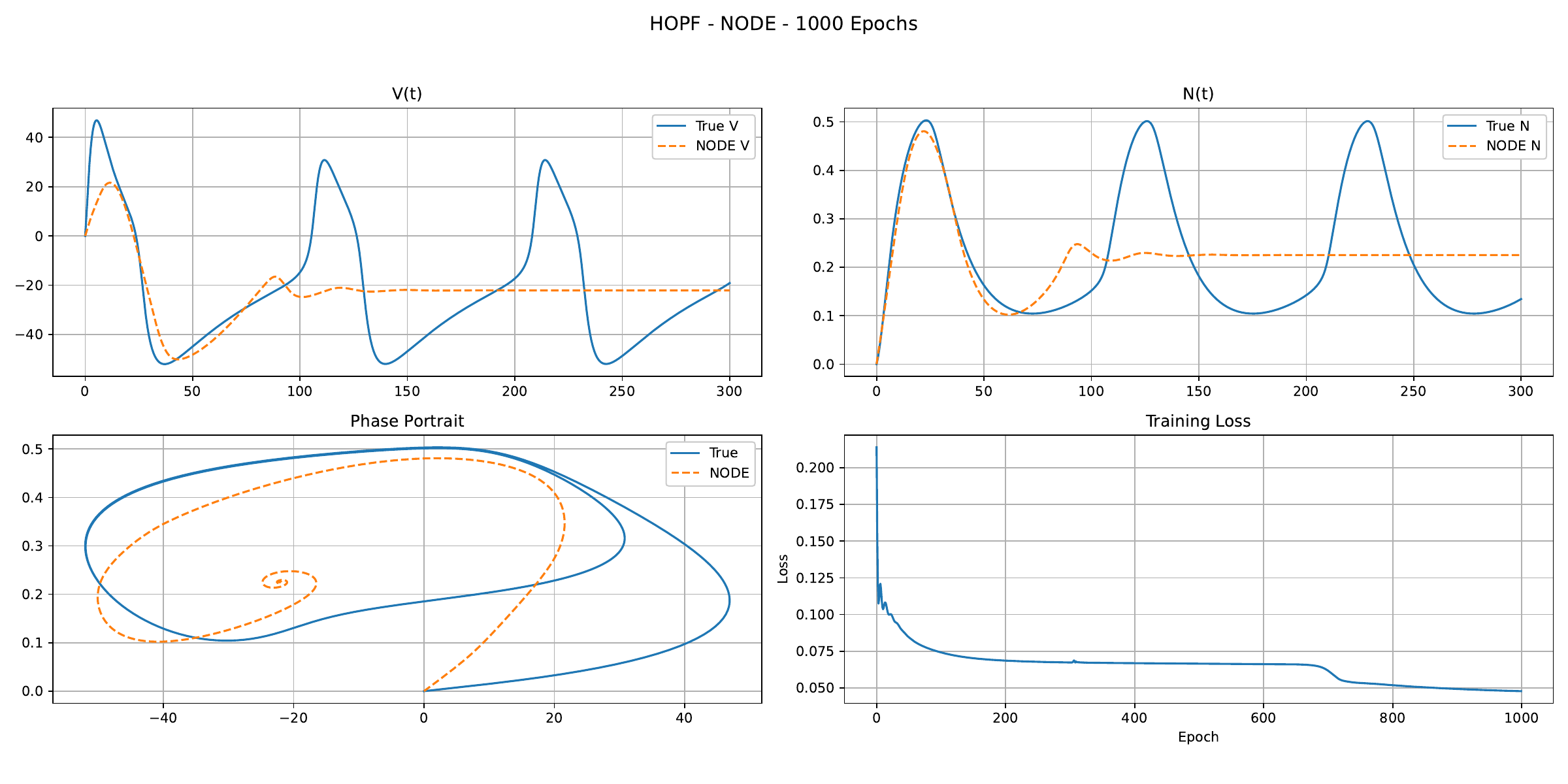}\\
          \end{minipage}
    \hfill
    \begin{minipage}[t]{0.48\textwidth}
        \centering
        \includegraphics[width=\textwidth]{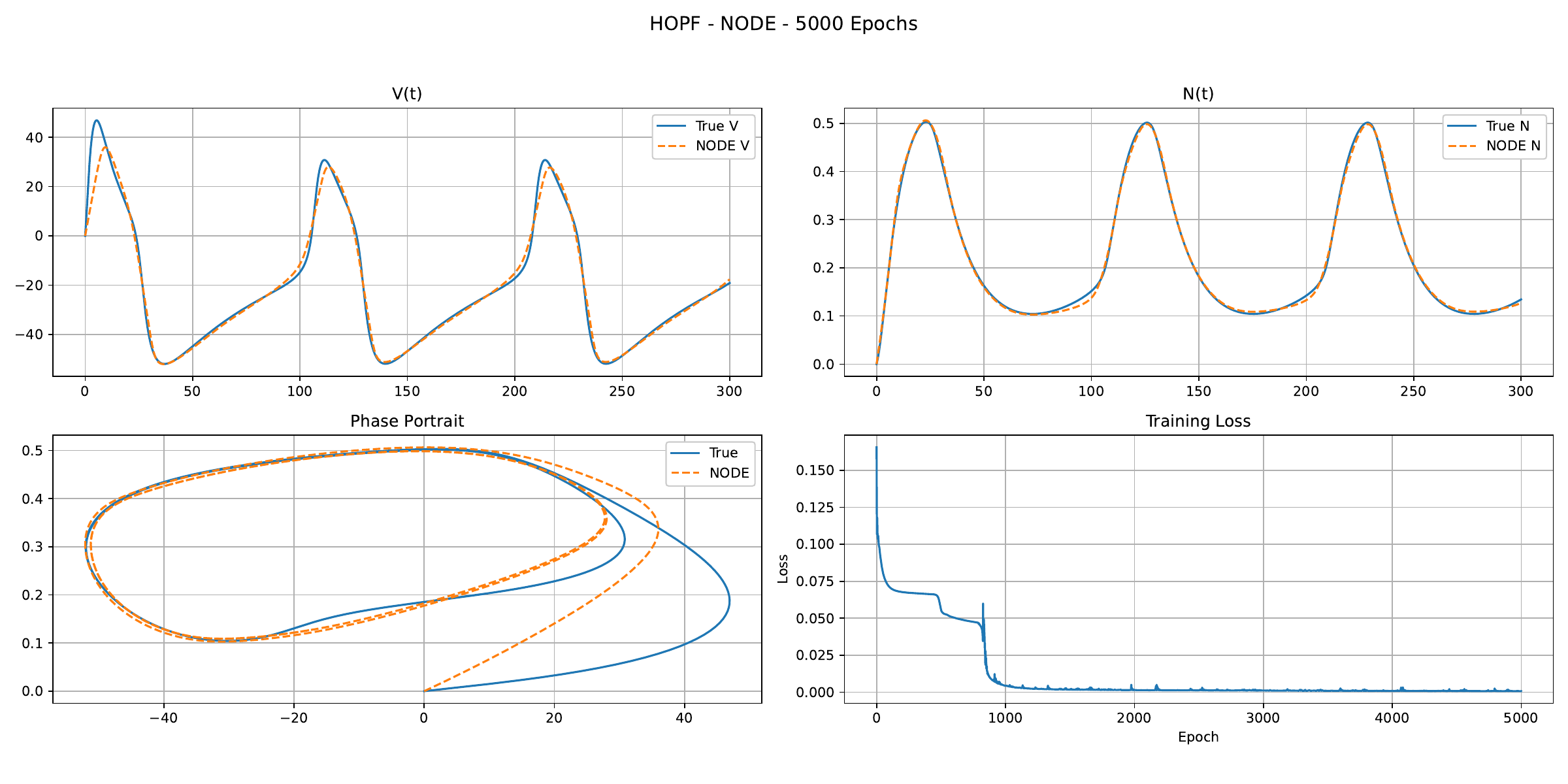}\\
    \end{minipage}
    \begin{minipage}[t]{0.48\textwidth}
        \centering
        \includegraphics[width=\textwidth]{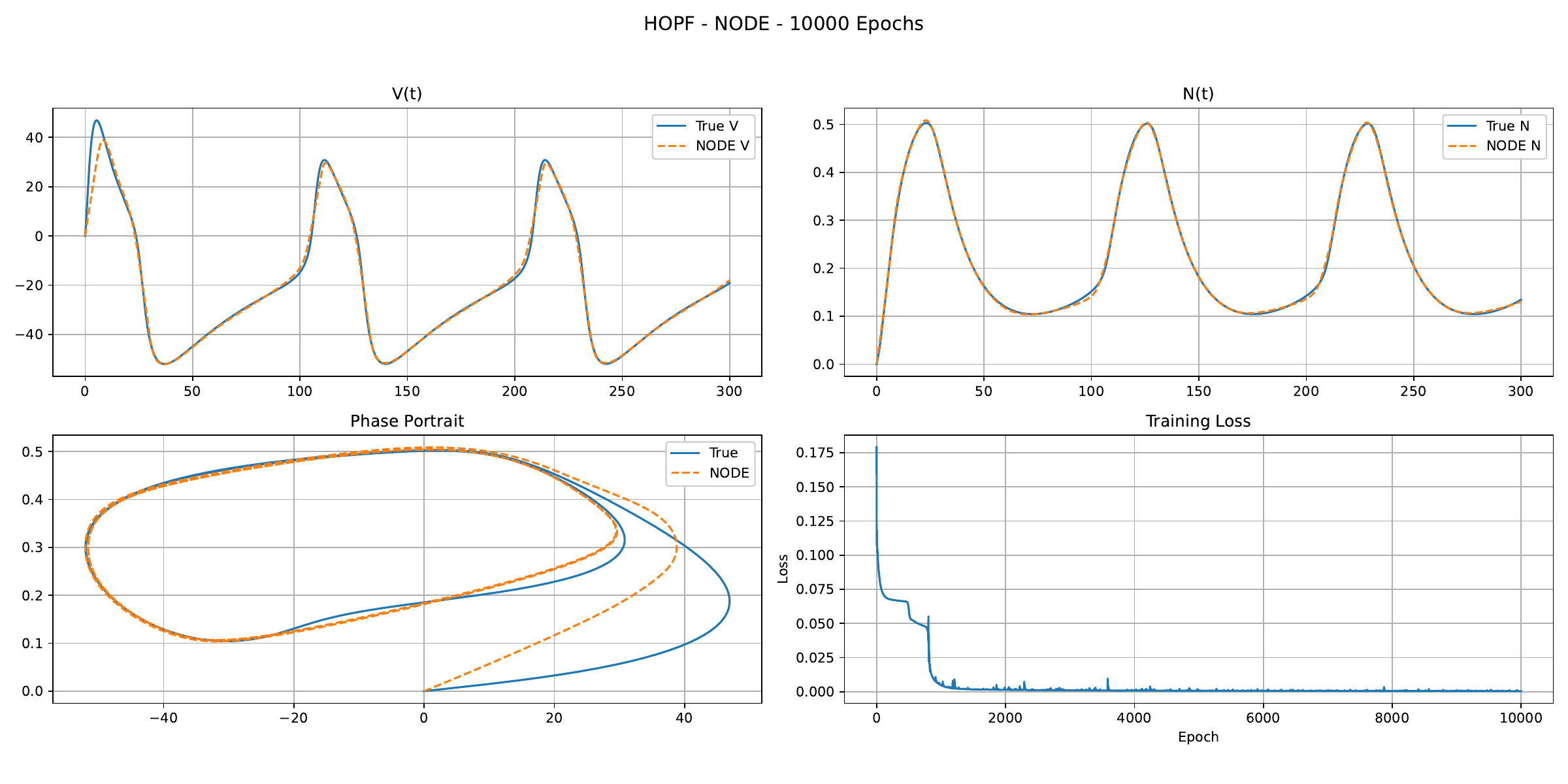}\\
      \end{minipage}
    \hfill
    \begin{minipage}[t]{0.48\textwidth}
        \centering
        \includegraphics[width=\textwidth]{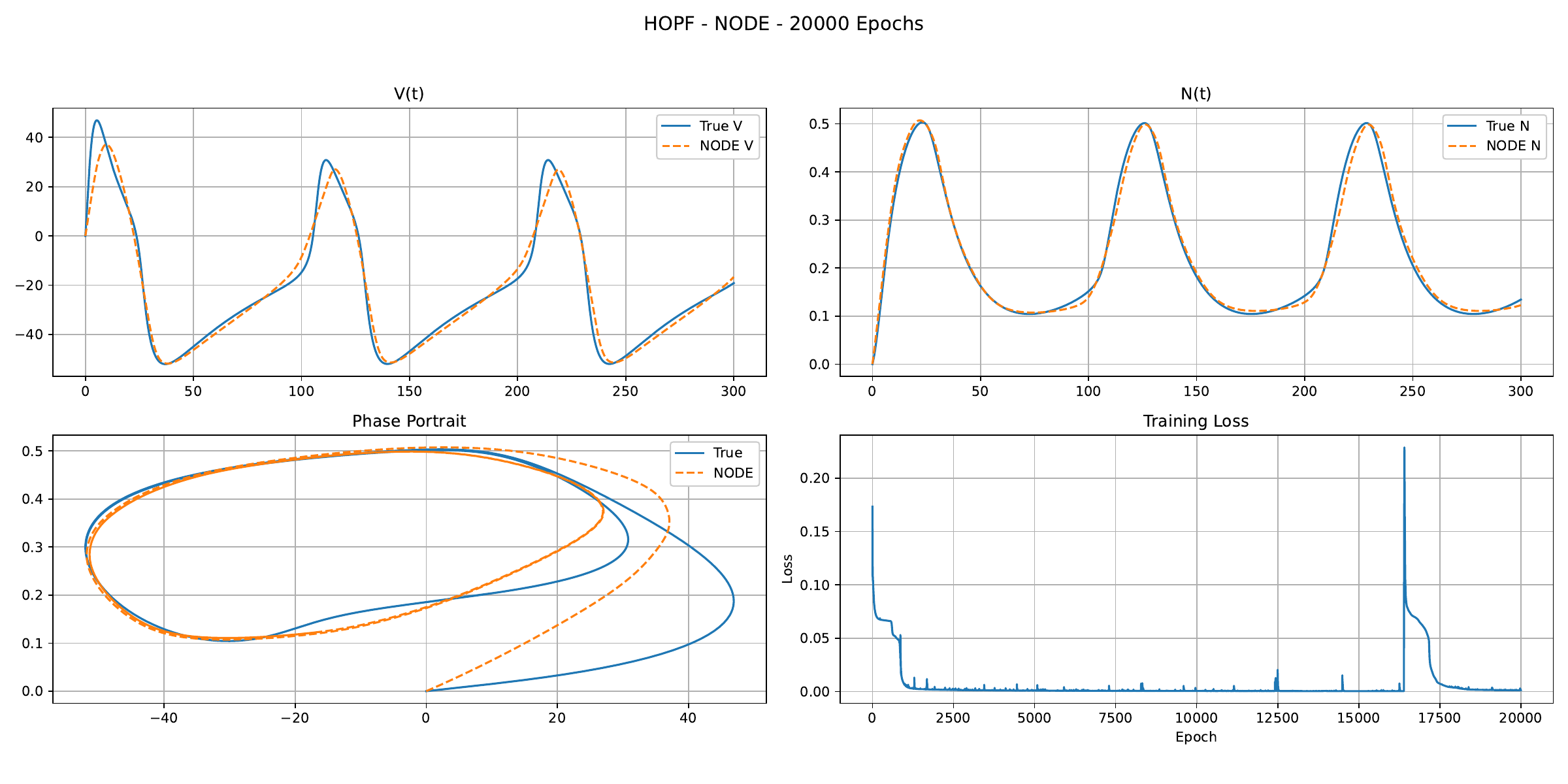}\\
    \end{minipage}

    \caption{NODE predicted vs ground truth trajectories under the Hopf regime for $I_{ext}=90\mu A/cm^2$ across different training epochs.}
    \label{fig:hoph_node}
\end{figure}

A similar behavior is observed in the SNLC regime for input currents $I_{ext}=42 \mu A/cm^2$, which induce bifurcation. As illustrated in Figures \ref{fig:snlc_pinn} and \ref{fig:snlc_node}, both the Neural ODE and PINN architectures successfully reproduce the system’s oscillatory dynamics under these conditions. 
\begin{figure}[!htbp]
    \centering
    \begin{minipage}[t]{0.48\textwidth}
      \centering
      \includegraphics[width=\textwidth]{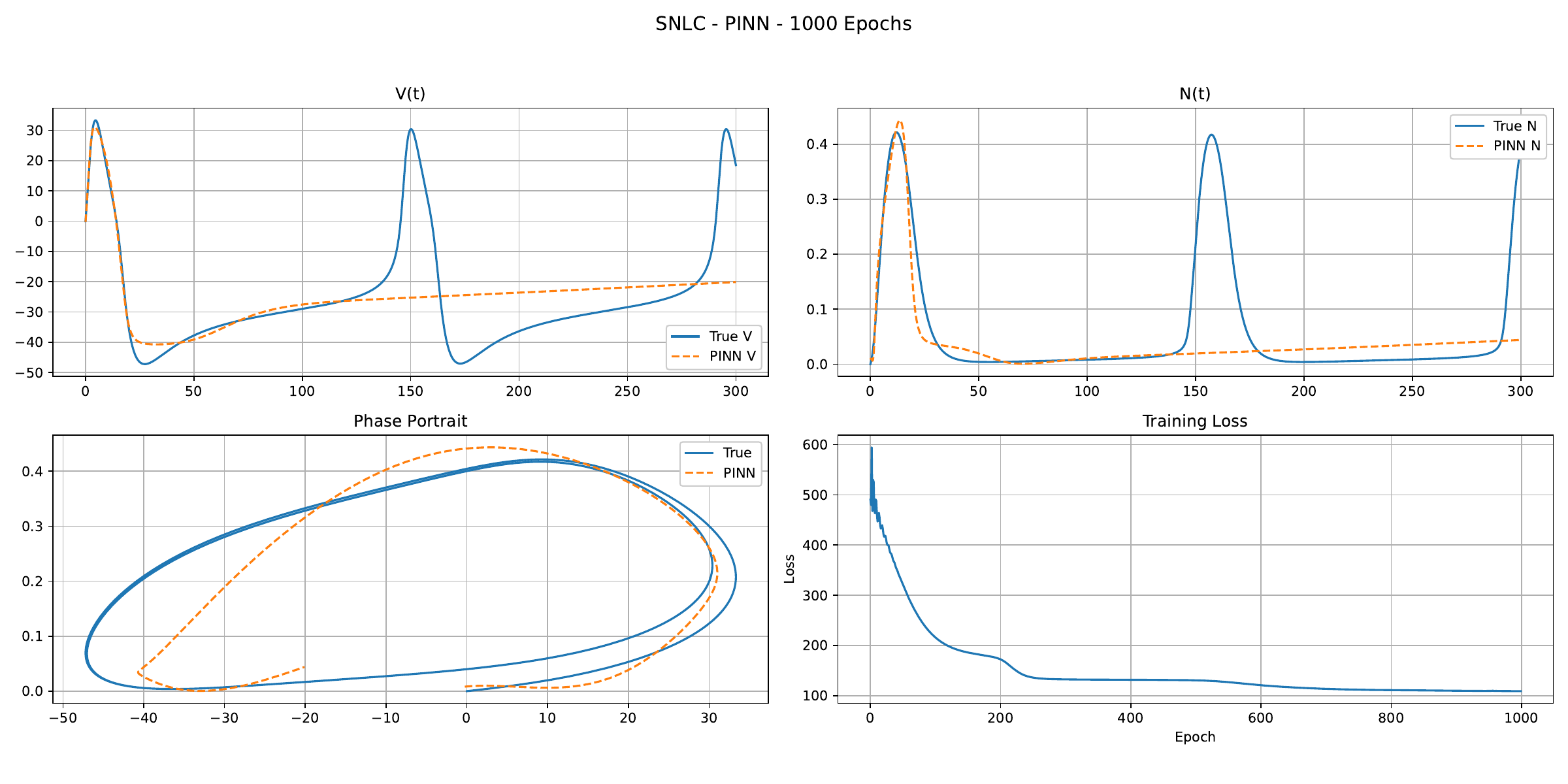}\\
    \end{minipage}
    \hfill
    \begin{minipage}[t]{0.48\textwidth}
        \centering
        \includegraphics[width=\textwidth]{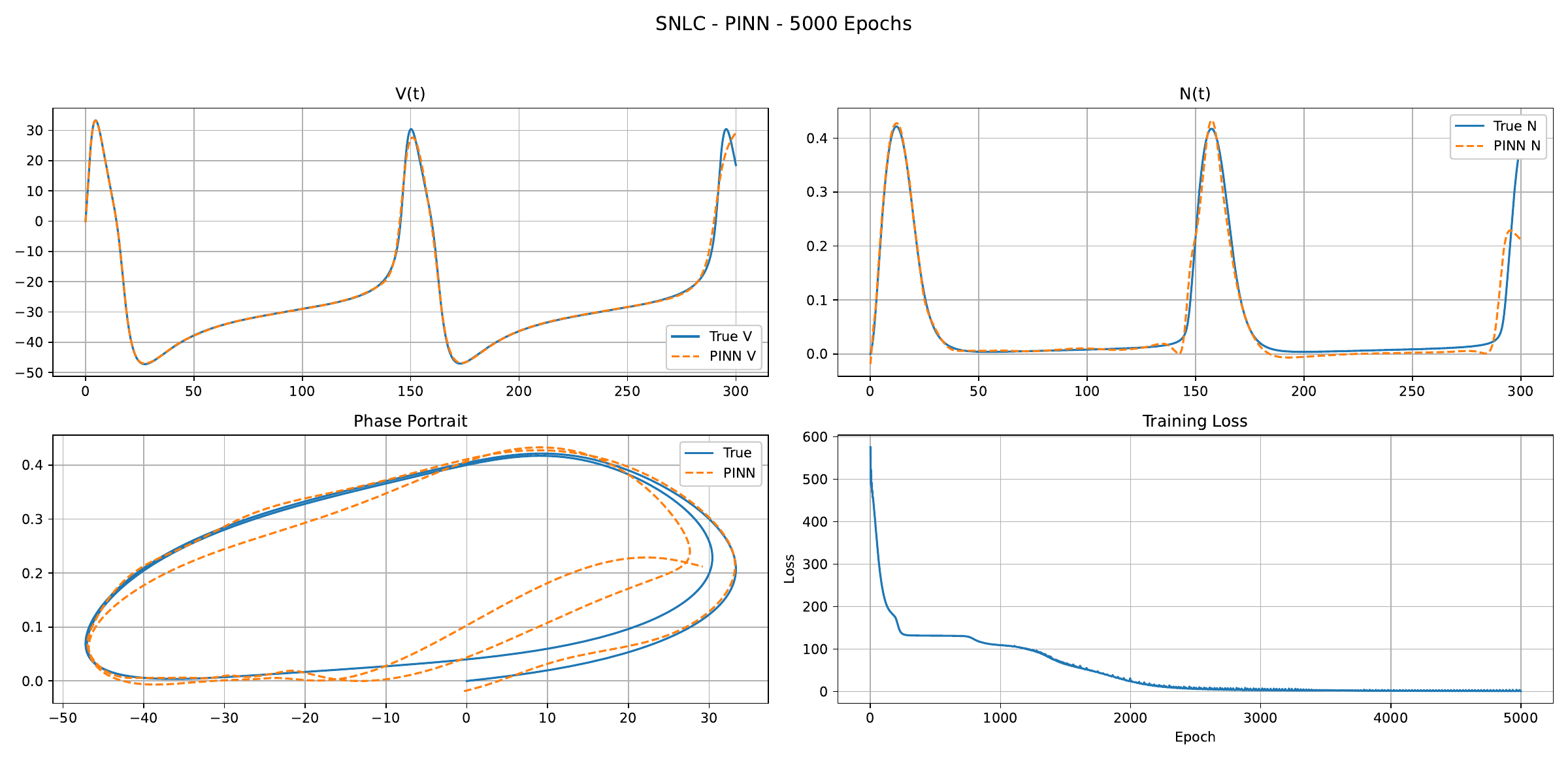}\\
    \end{minipage}
    \begin{minipage}[t]{0.48\textwidth}
        \centering
        \includegraphics[width=\textwidth]{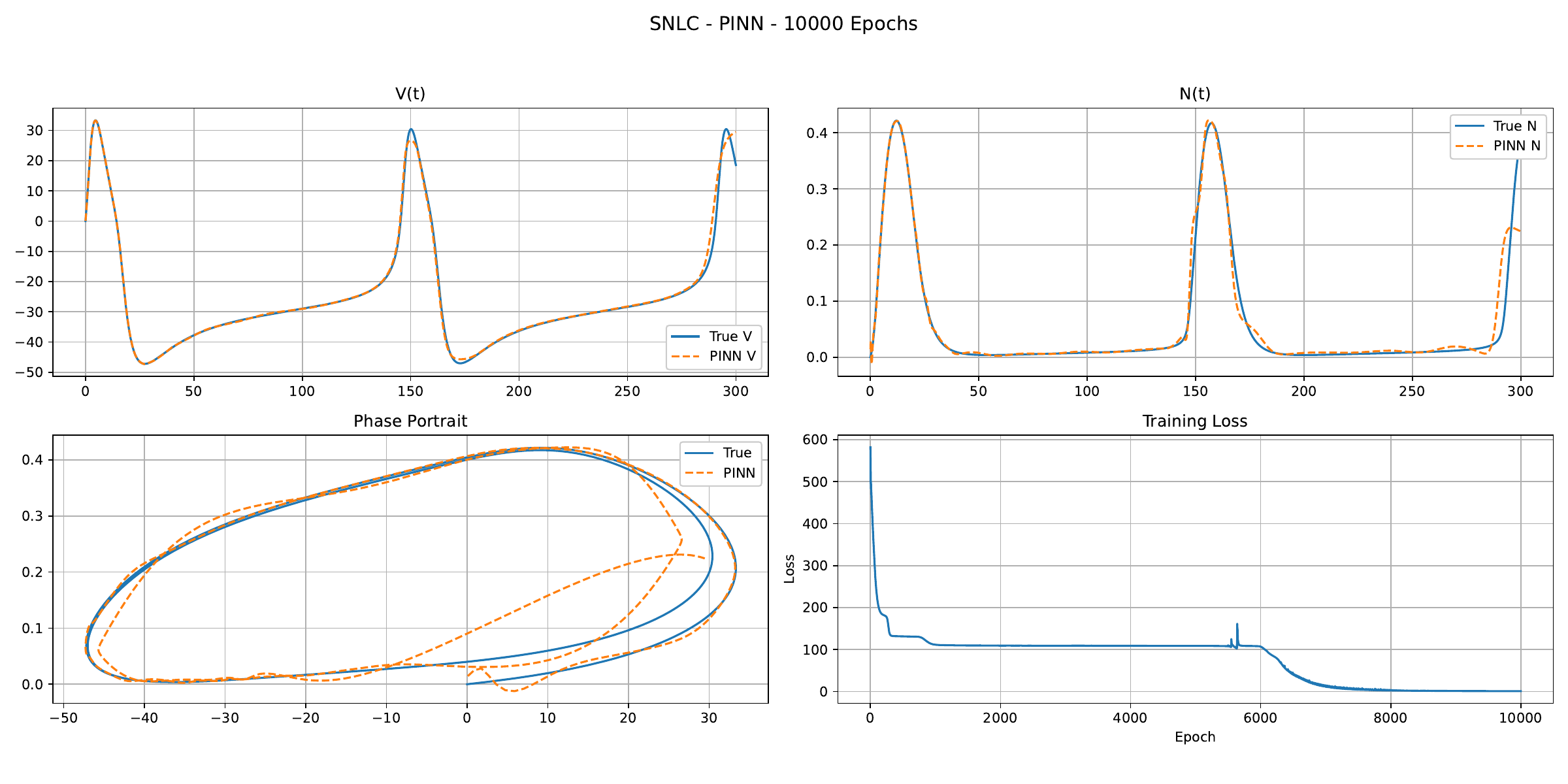}\\
    \end{minipage}
    \hfill
    \begin{minipage}[t]{0.48\textwidth}
        \centering
        \includegraphics[width=\textwidth]{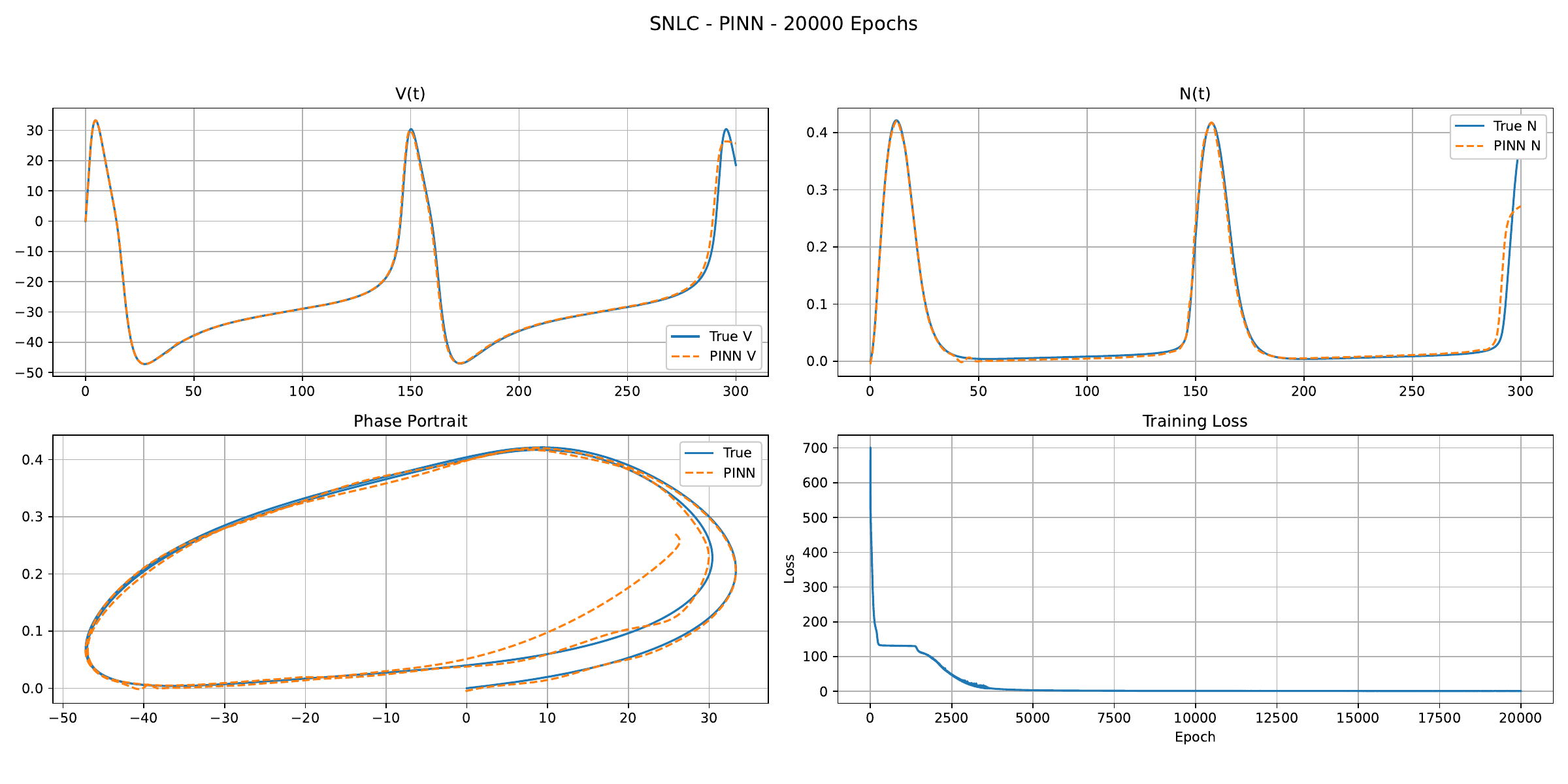}\\
    \end{minipage}
    \caption{PINN model under the SNLC regime with $I_{ext}=42\mu A/cm^2$ .}
    \label{fig:snlc_pinn}
\end{figure}
\begin{figure}[!htbp]
    \centering
     \begin{minipage}[t]{0.48\textwidth}
        \centering
        \includegraphics[width=\textwidth]{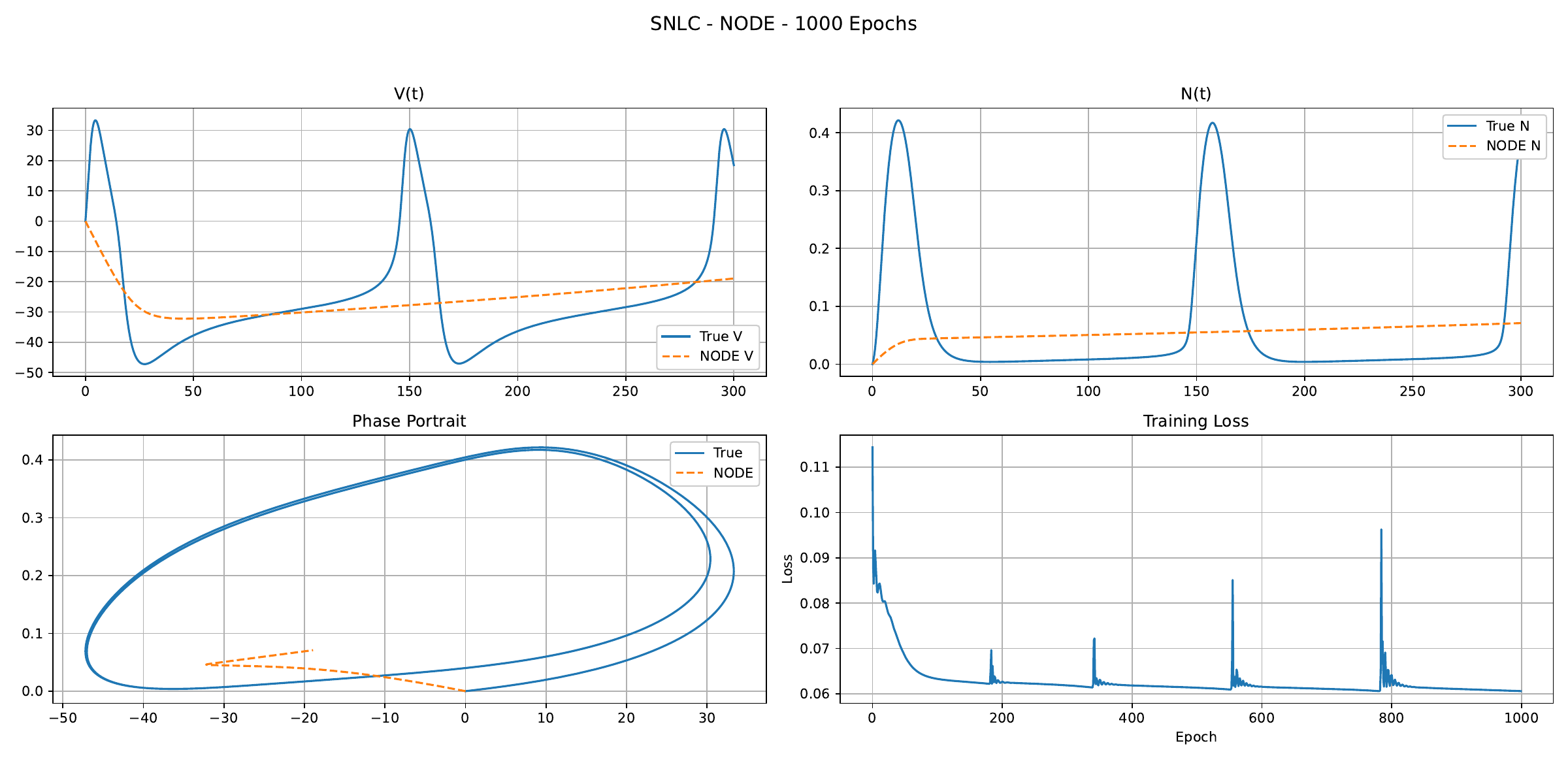}\\
     \end{minipage}
     \hfill
     \begin{minipage}[t]{0.48\textwidth}
        \centering
        \includegraphics[width=\textwidth]{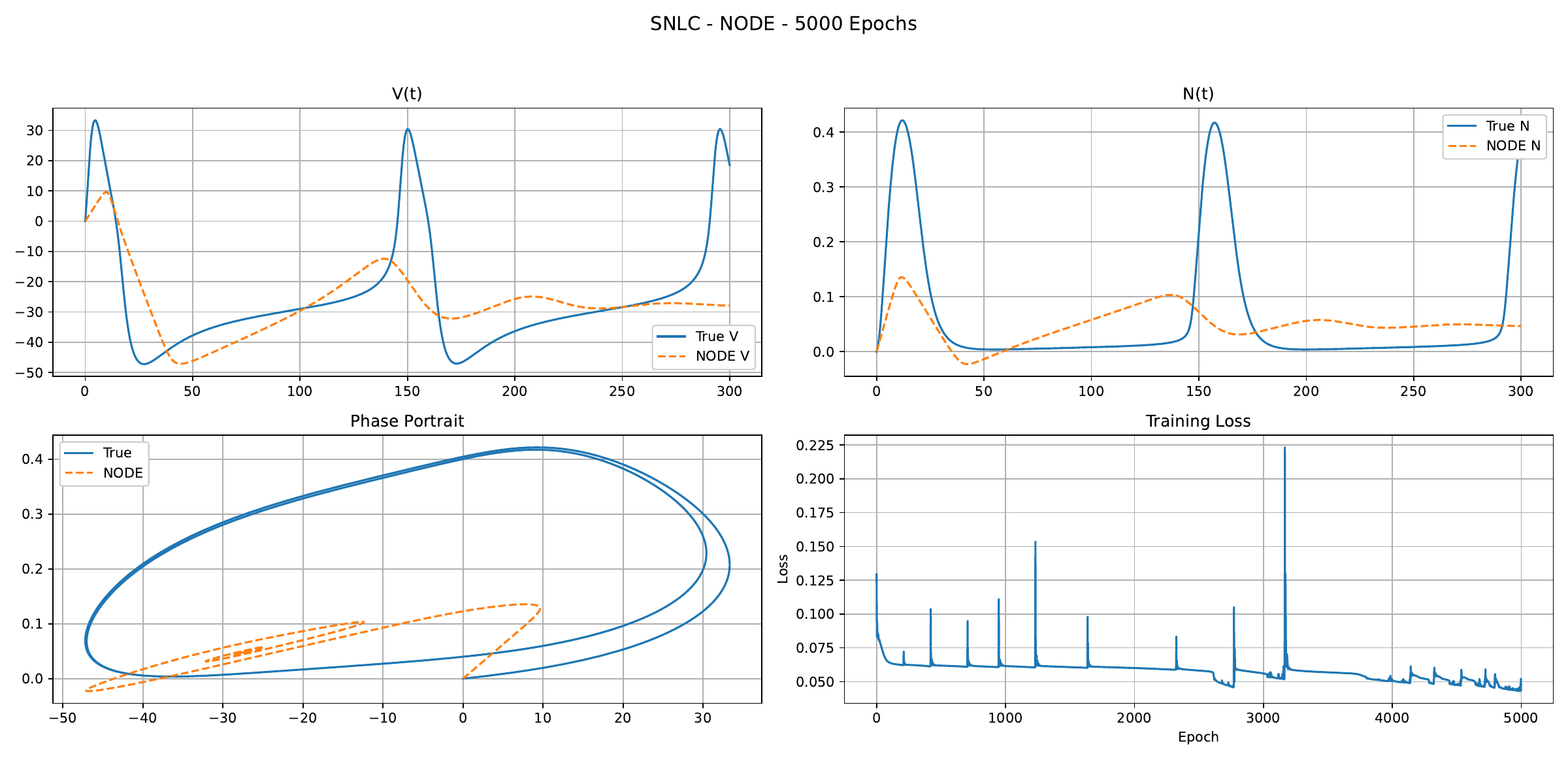}\\
     \end{minipage}
    \begin{minipage}[t]{0.48\textwidth}
        \centering
        \includegraphics[width=\textwidth]{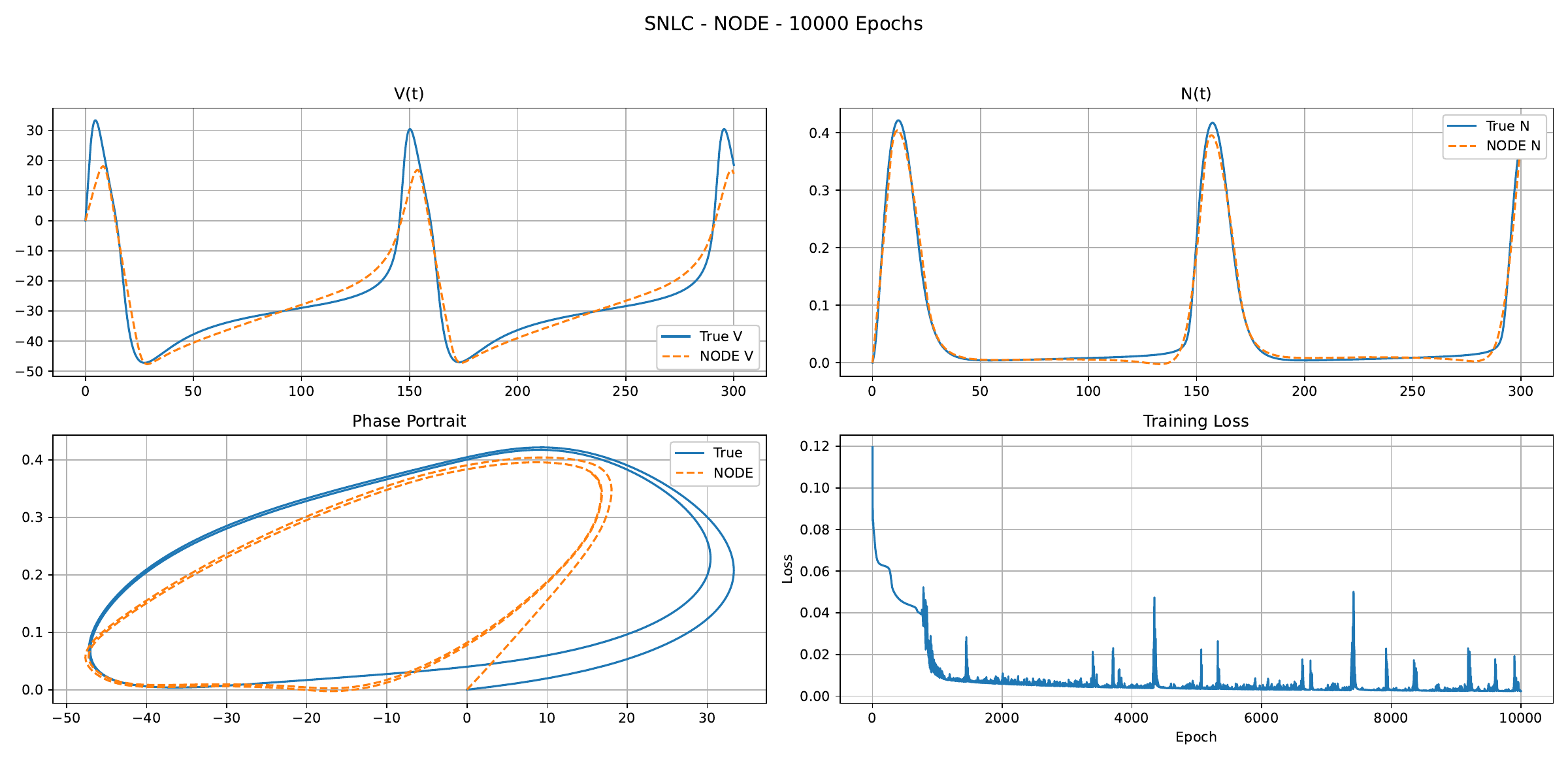}\\
    \end{minipage}
    \hfill
    \begin{minipage}[t]{0.48\textwidth}
        \centering
        \includegraphics[width=\textwidth]{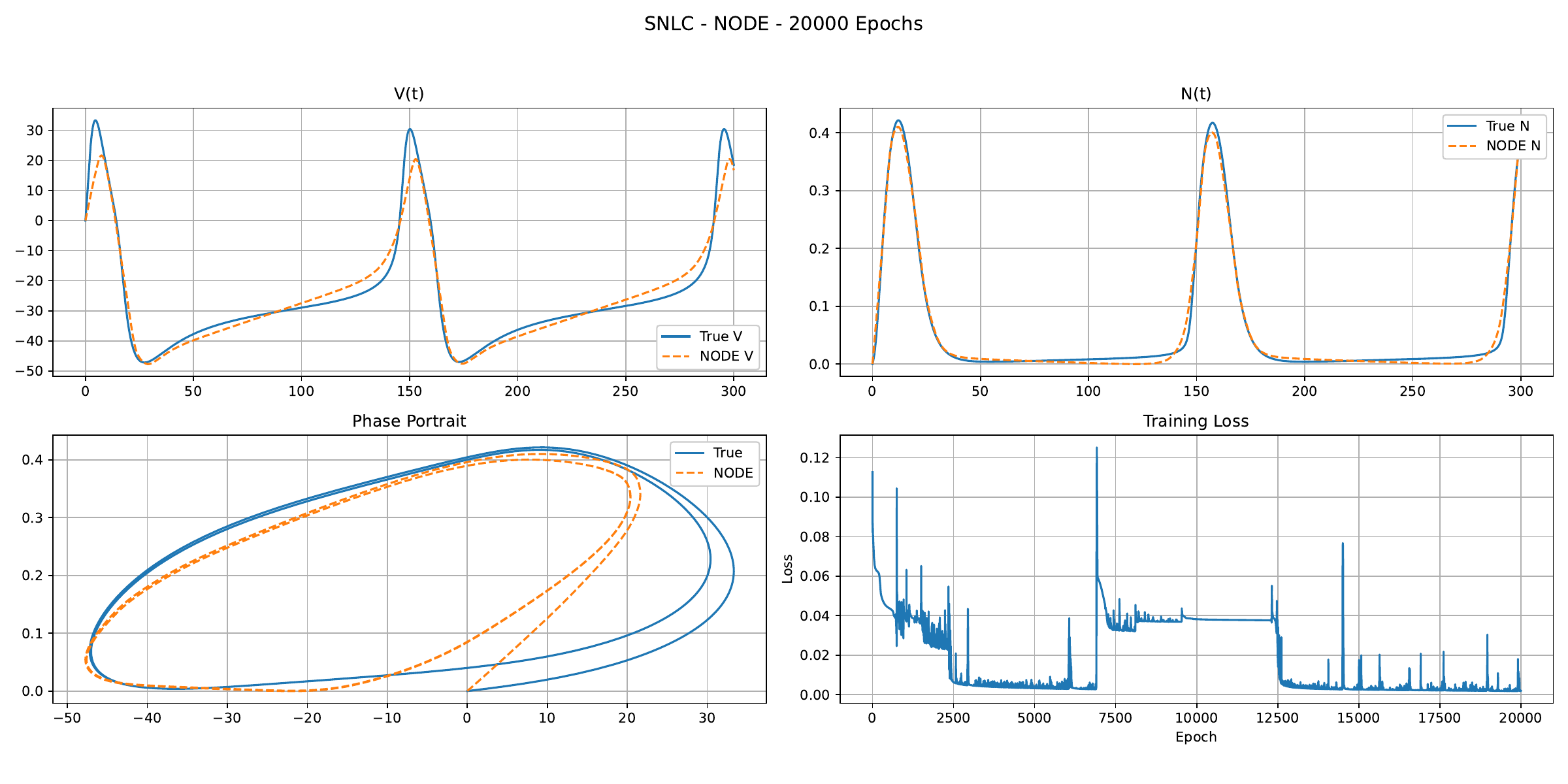}\\
    \end{minipage}

    \caption{NODE model for SNLC regime with $I_{ext}=42\mu A/cm^2$.}
    \label{fig:snlc_node}
\end{figure}

For the Homoclinic regime the PINN model (Figure \ref{fig:homoclinic_pinn}) is capable of accurately approximating the system’s dynamics following $5,000$ training epochs. Each subplot illustrates how increased training improves the model's ability to reproduce the complex bifurcation-induced dynamics, including sharp voltage transitions and slow manifolds.

\begin{figure}[!htbp]
    \centering
    \begin{minipage}[t]{0.48\textwidth}
        \centering
        \includegraphics[width=\textwidth]{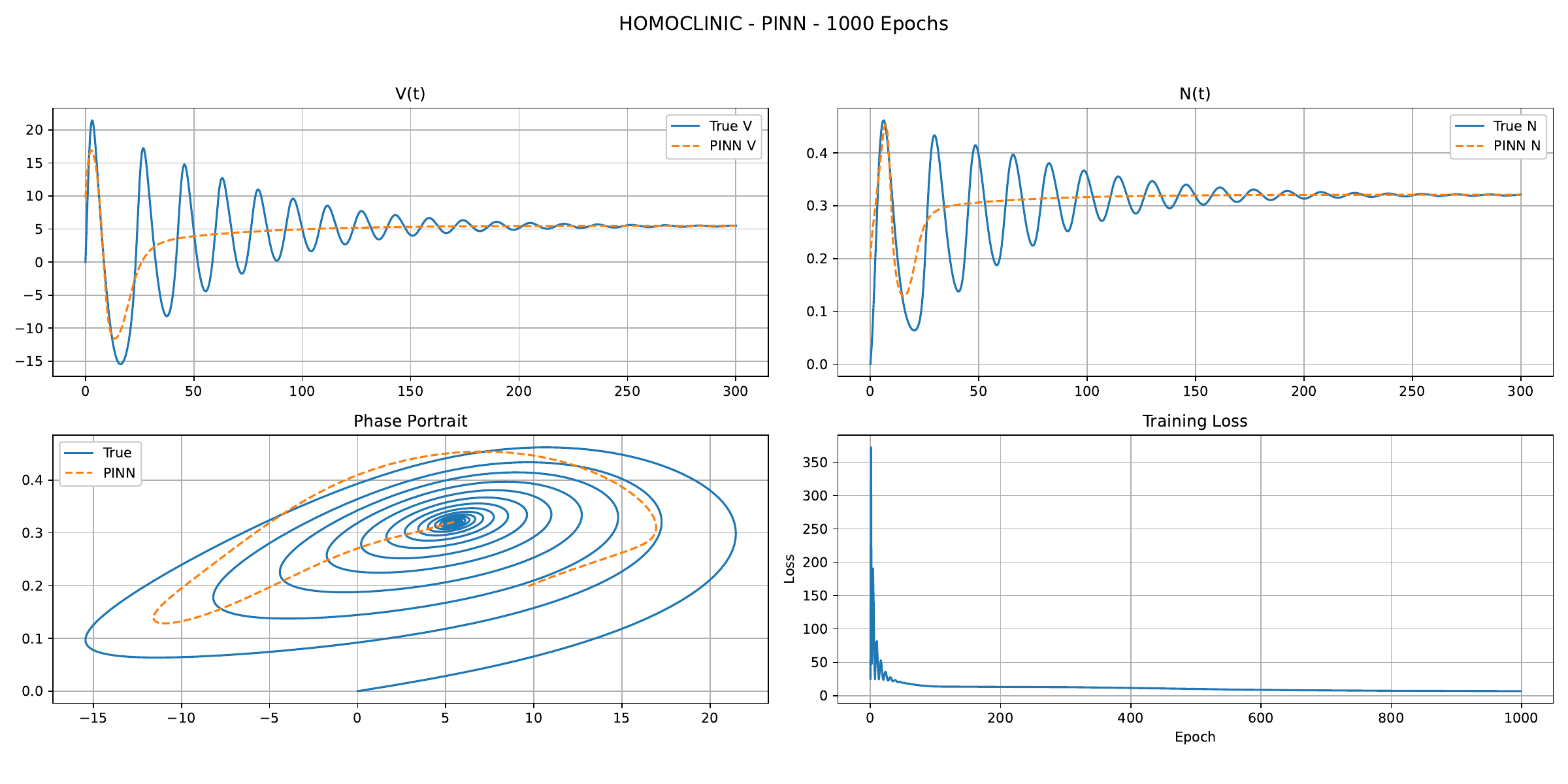}\\
    \end{minipage}
    \hfill
    \begin{minipage}[t]{0.48\textwidth}
        \centering
        \includegraphics[width=\textwidth]{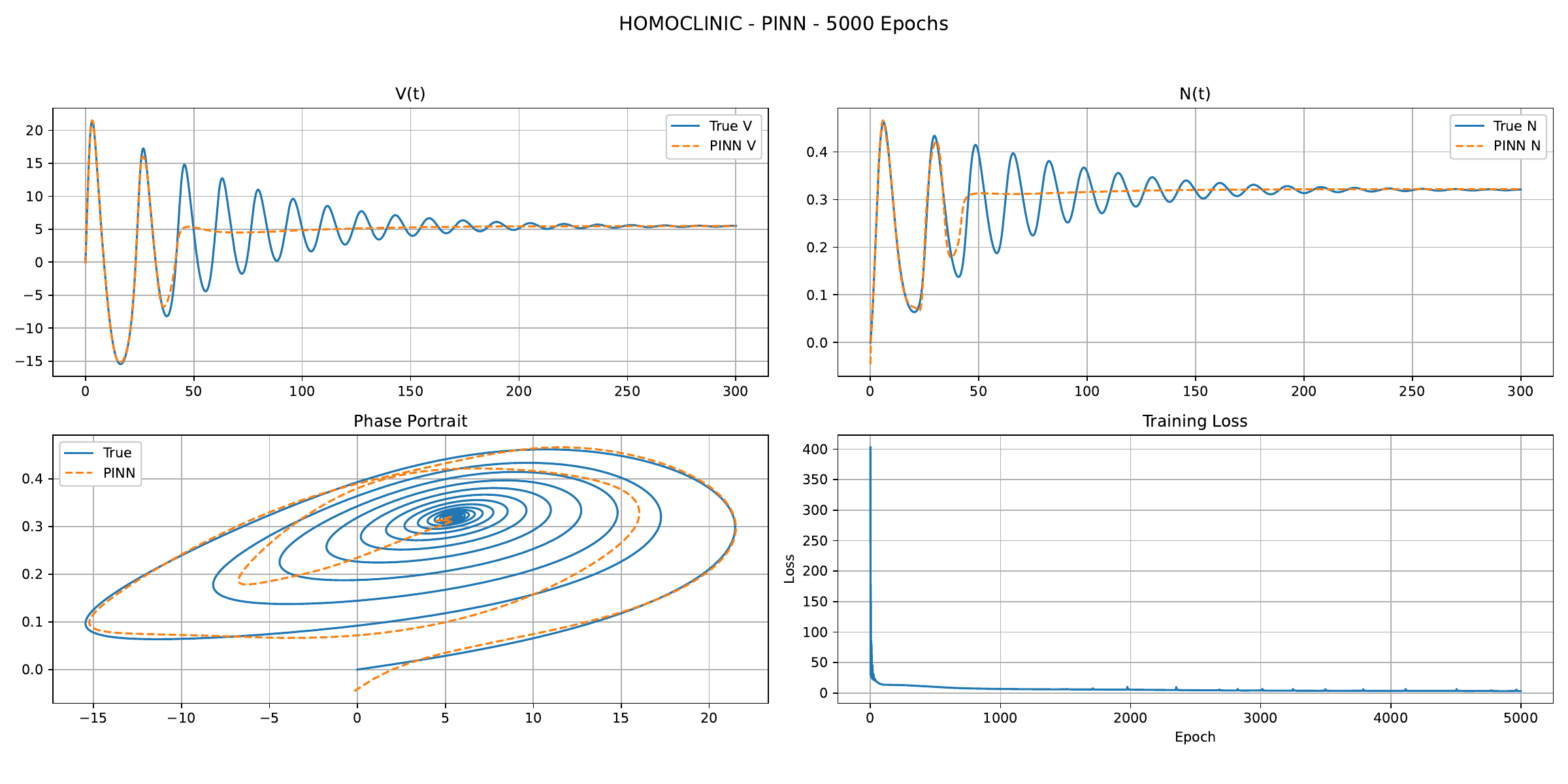}\\
    \end{minipage}
    \begin{minipage}[t]{0.48\textwidth}
        \centering
        \includegraphics[width=\textwidth]{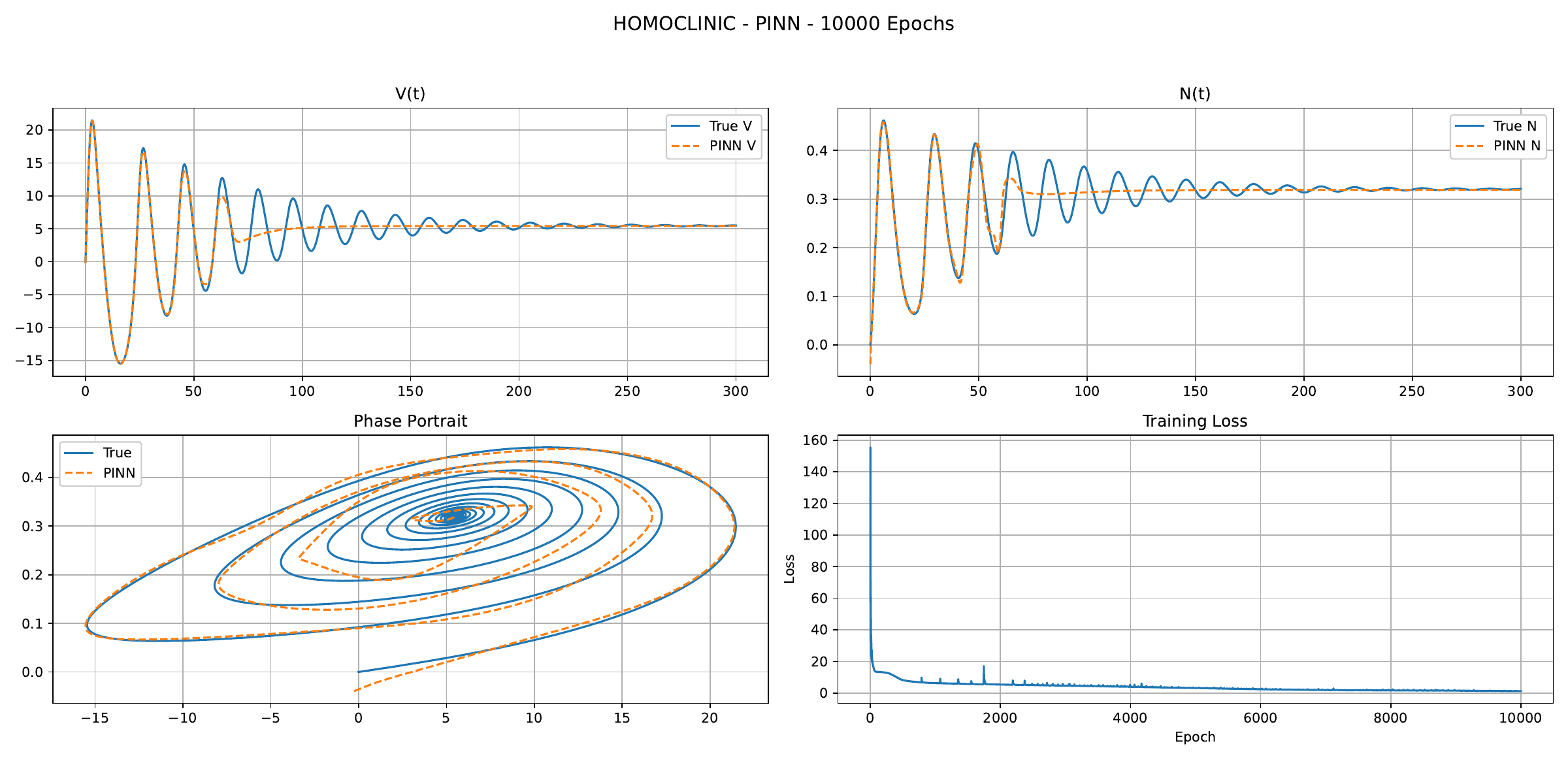}\\
    \end{minipage}
    \hfill
    \begin{minipage}[t]{0.48\textwidth}
        \centering
        \includegraphics[width=\textwidth]{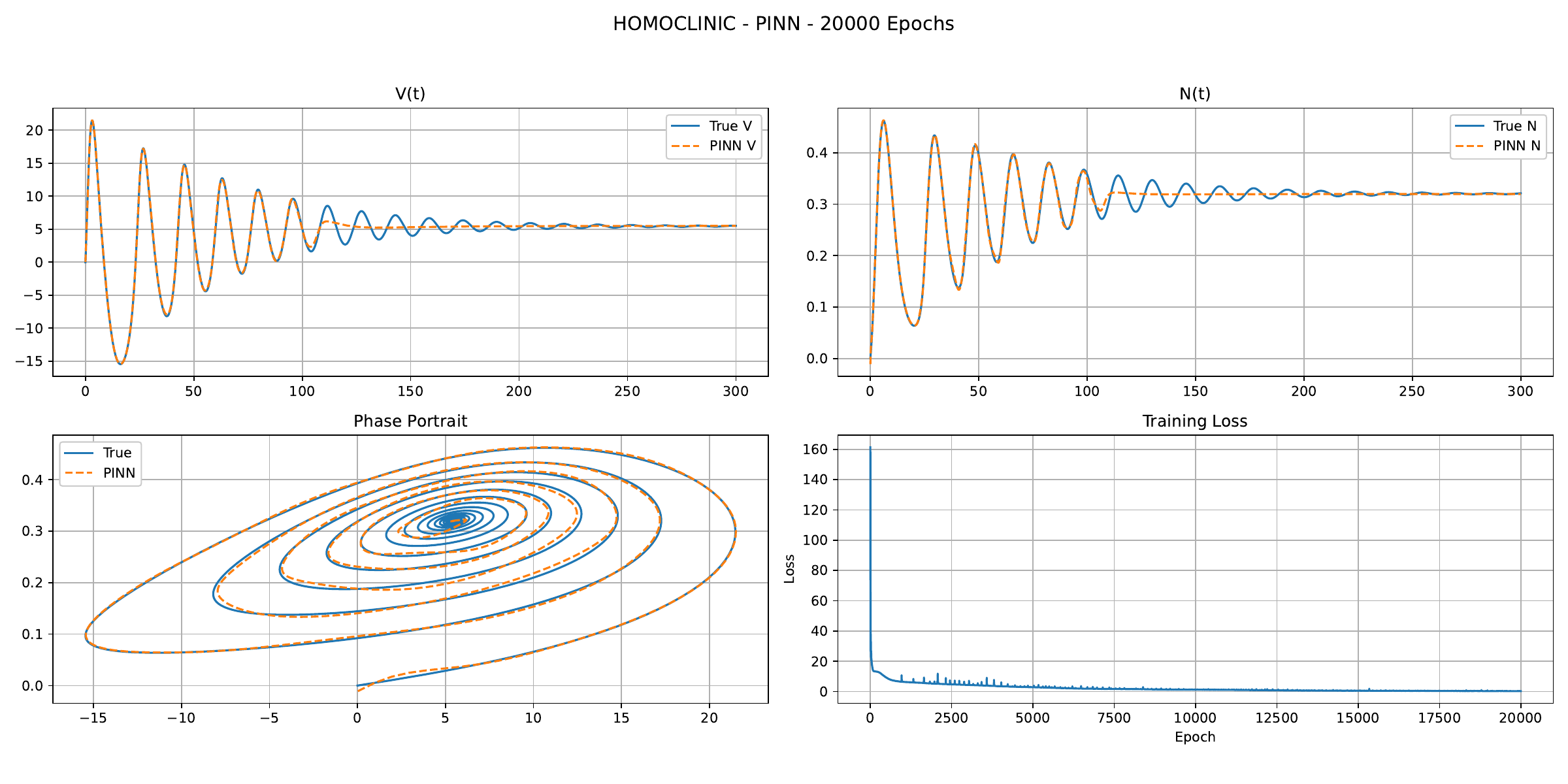}\\
    \end{minipage}

    \caption{PINN-based predictions for the Morris–Lecar model in the homoclinic regime with $I_{ext}=50\mu A/cm^2$ across different training epochs. }
    \label{fig:homoclinic_pinn}
\end{figure}
In contrast,  homoclinic regime $I_{ext}=50\mu A/cm^2$ (Figure \ref{fig:homoclinic_node}), the Neural ODE fails to reproduce the system’s oscillatory behavior, highlighting a limitation of purely data-driven models in this highly sensitive bifurcation scenario.
\begin{figure}[!htbp]
    \centering
    \begin{minipage}[t]{0.48\textwidth}
       \centering
       \includegraphics[width=\textwidth]{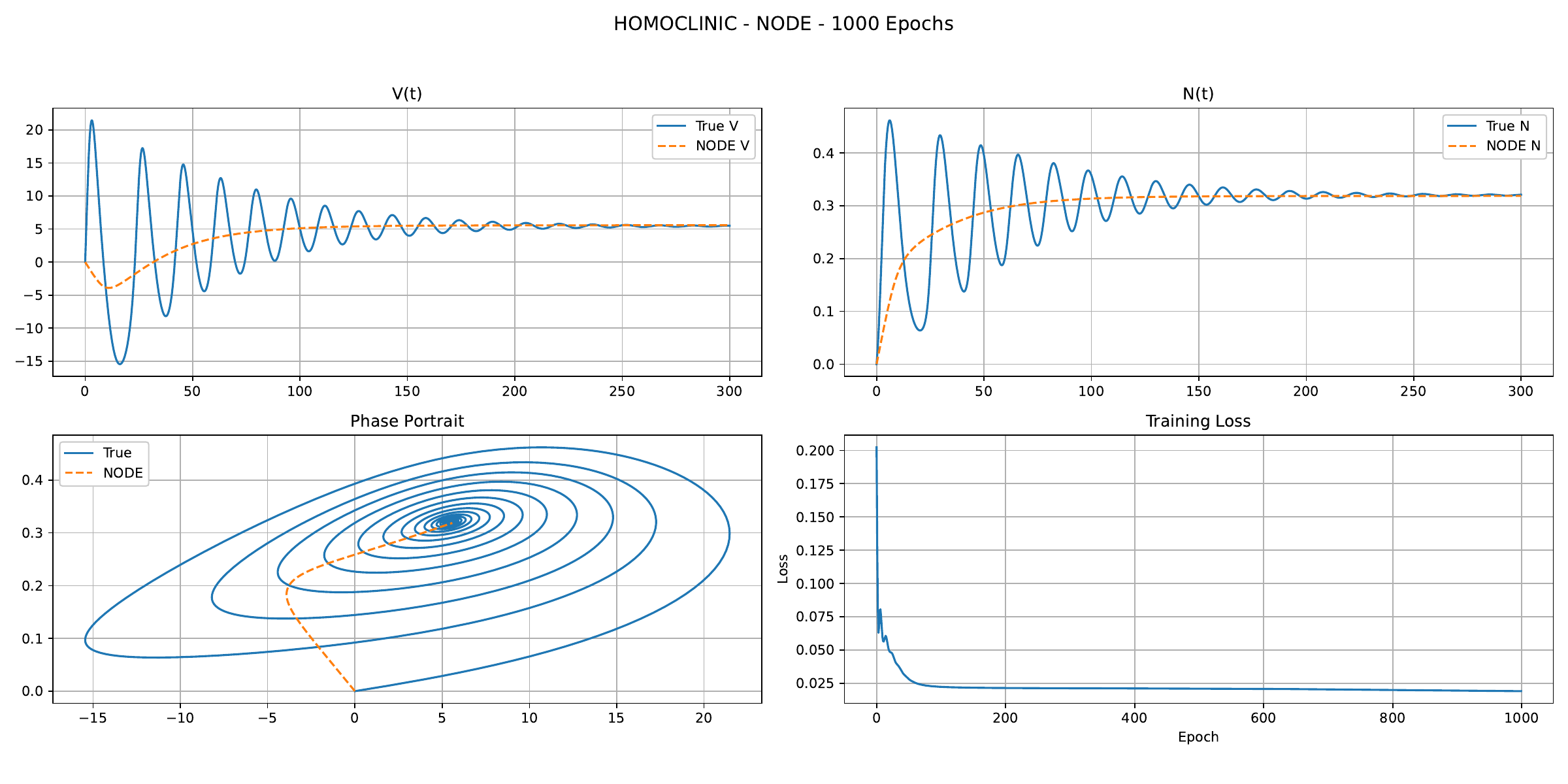}\\
    \end{minipage}
    \hfill
    \begin{minipage}[t]{0.48\textwidth}
        \centering
        \includegraphics[width=\textwidth]{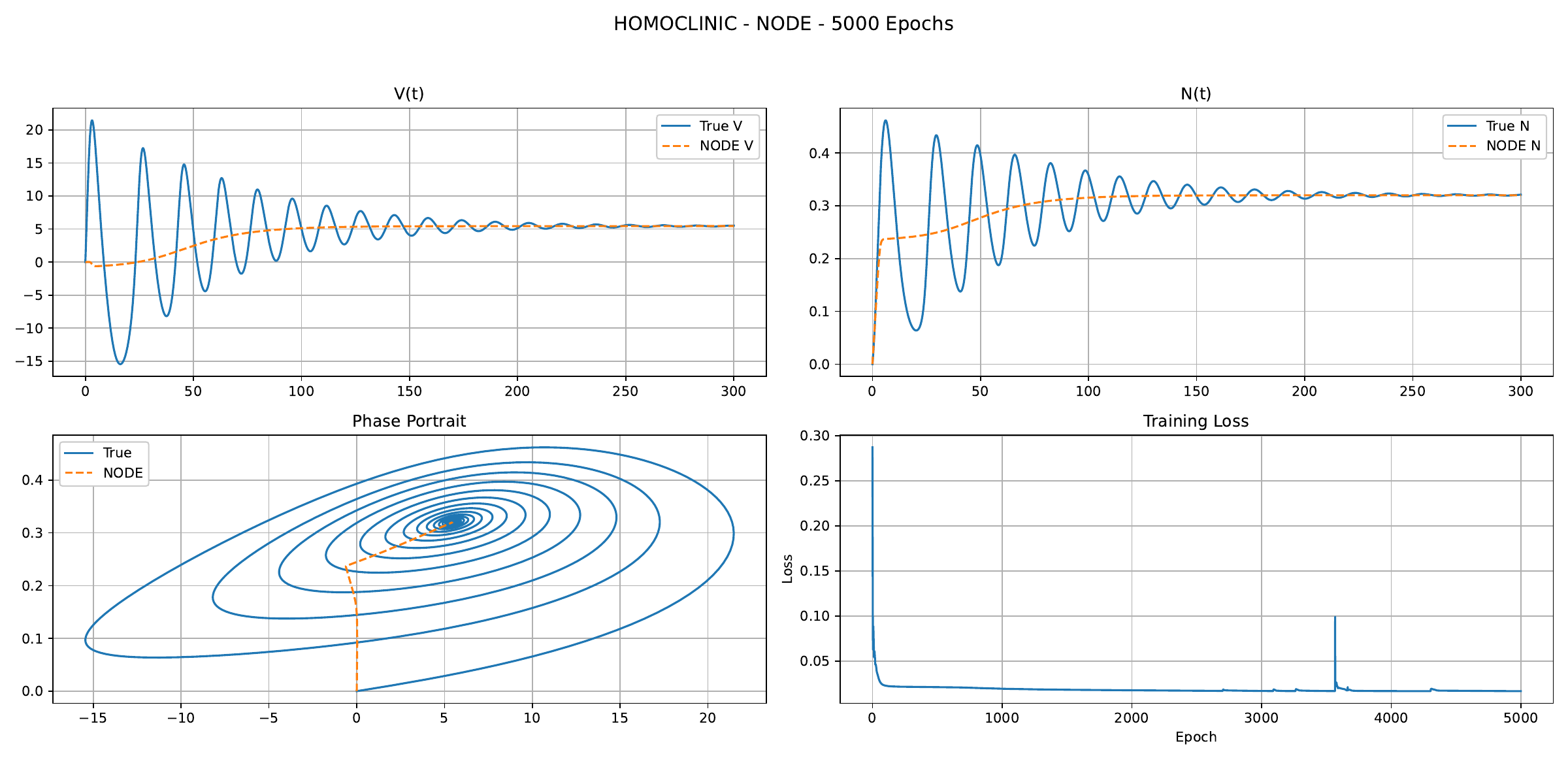}\\
    \end{minipage}
    \begin{minipage}[t]{0.48\textwidth}
        \centering
        \includegraphics[width=\textwidth]{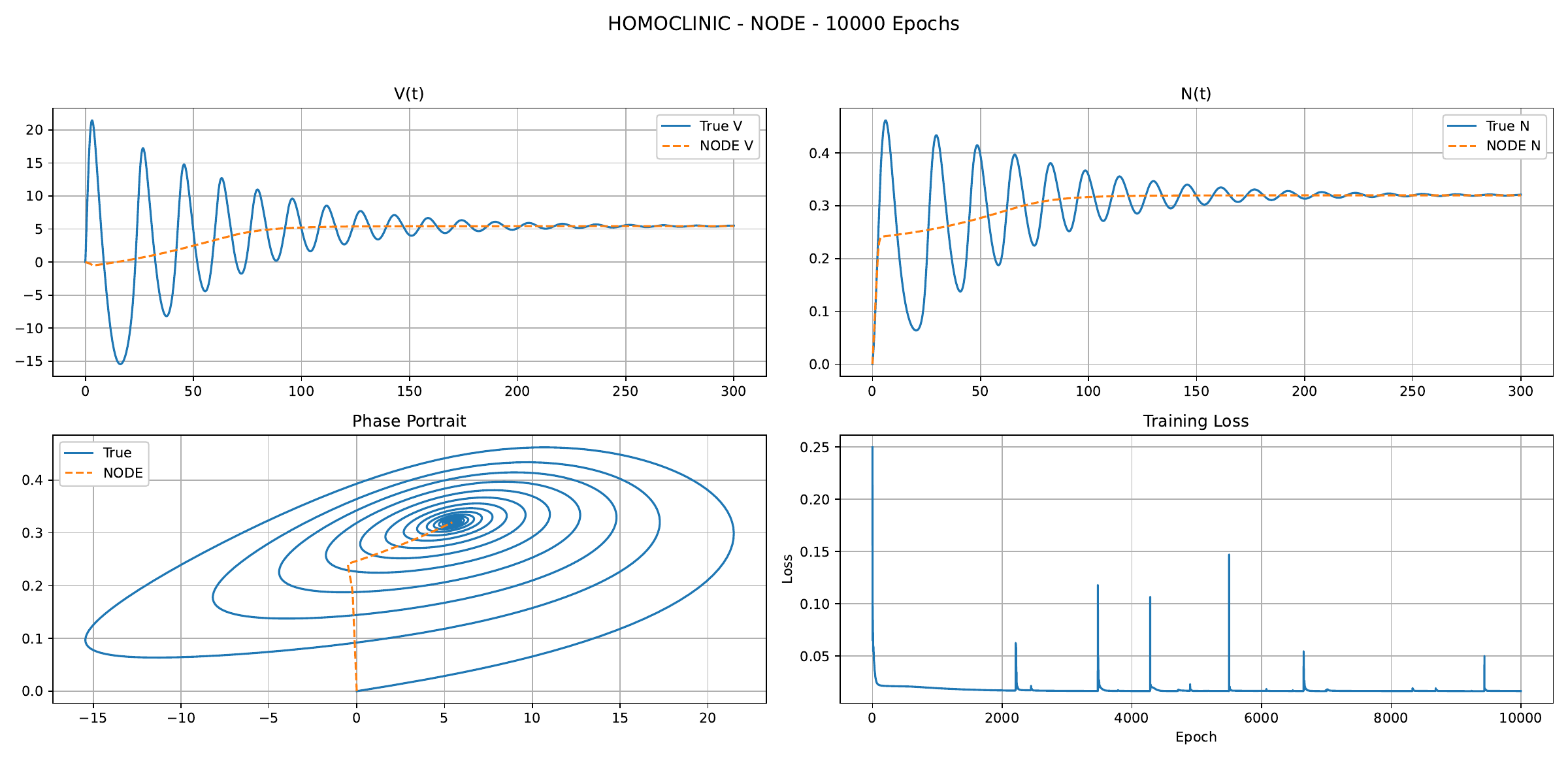}\\
    \end{minipage}
    \hfill
    \begin{minipage}[t]{0.48\textwidth}
        \centering
        \includegraphics[width=\textwidth]{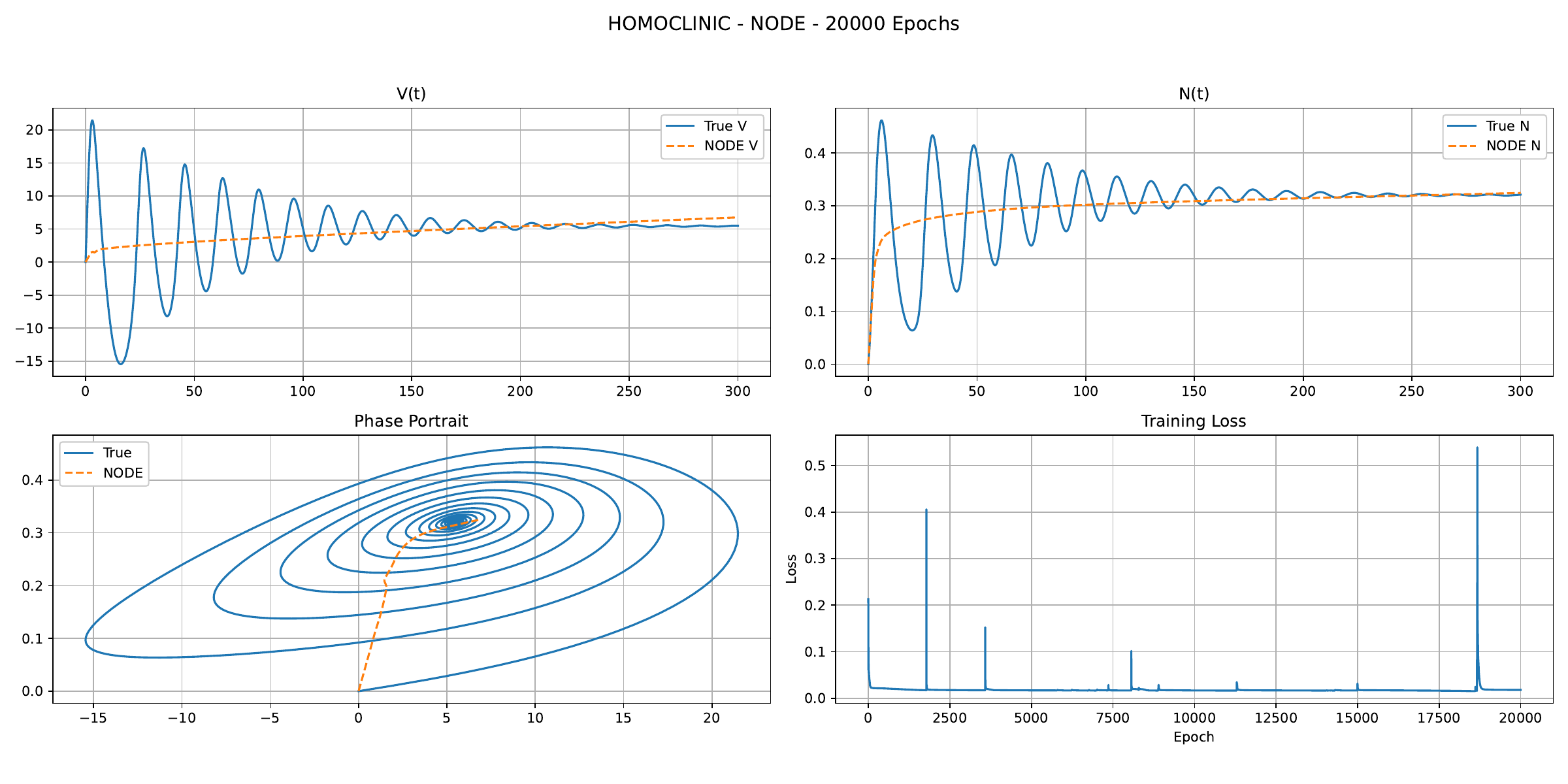}\\
    \end{minipage}
    \caption{NODE model for the homoclinic regime with $I_{ext}=50\mu A/cm^2$.}
    \label{fig:homoclinic_node}
\end{figure}
The results underscore both the challenges and the eventual convergence behavior of the NODE model in capturing homoclinic trajectories. As discussed in Remark~\ref{rem:SiLU}, we employed the SiLU activation function, with the outcome illustrated in Figure~\ref{fig:homoclinicSiLU}. Notably, the NODE successfully reproduces the underlying dynamics of the Morris–Lecar system after $10,000$ training epochs.
\begin{figure}[!htbp]
    \centering
    \includegraphics[width=0.85\textwidth]{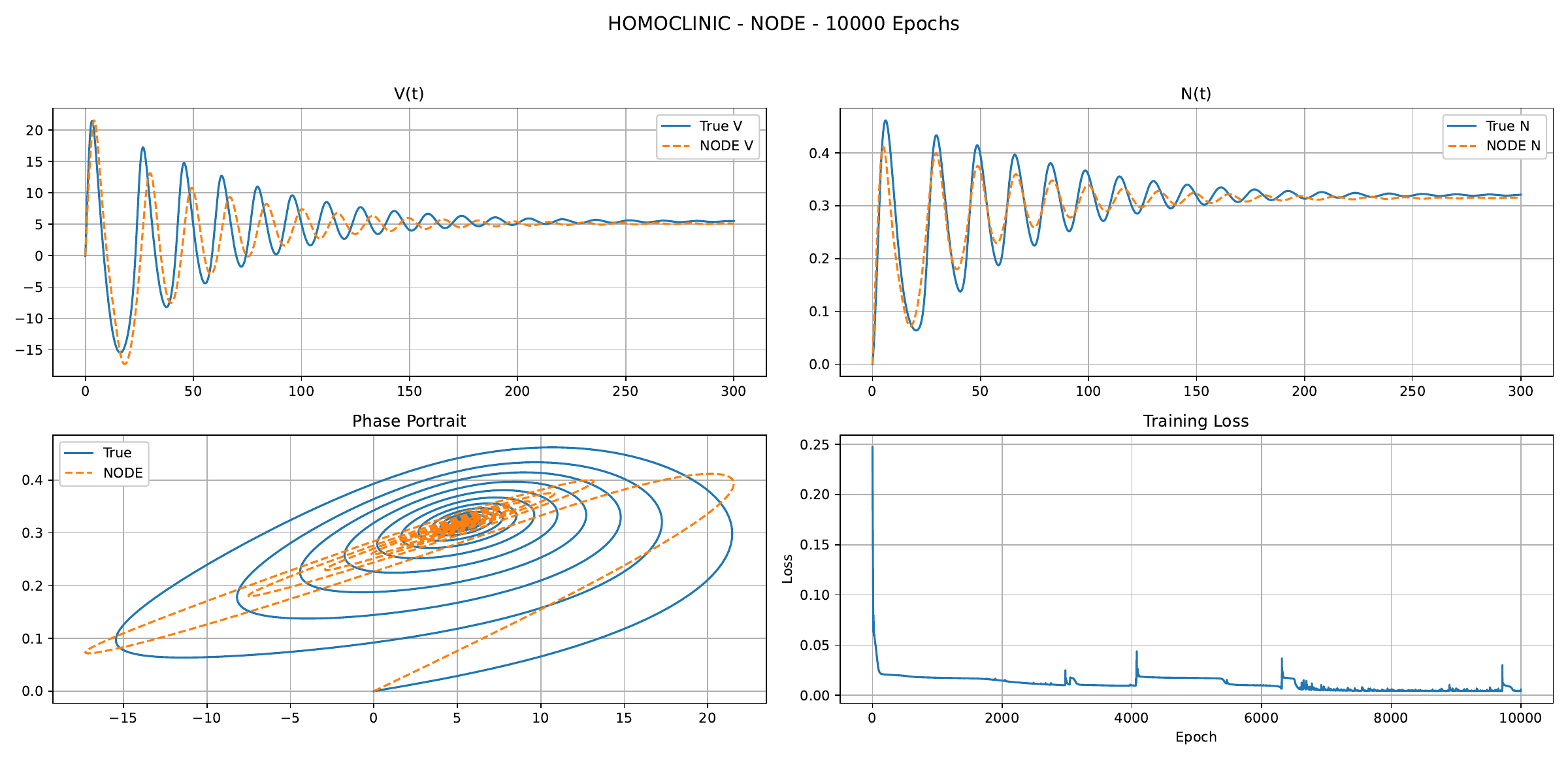}
    \caption{NODE model predictions for the homoclinic regime using the SiLU activation function after $10,000$ training epochs. The plots show the predicted membrane voltage $V(t)$, gating variable $N(t)$, phase portrait, and training loss. Compared to earlier results with Tanh (see Figure~\ref{fig:homoclinic_node}), the use of SiLU leads to visibly improved dynamics, particularly in the reproduction of the orbit structure. However, significant deviations remain in the later phase and voltage trajectory, especially near the saddle-node region, indicating that the NODE still struggles to fully capture the global geometry of the homoclinic orbit.}
    \label{fig:homoclinicSiLU}
\end{figure}

In summary, the qualitative results across the three bifurcation regimes (Hopf, SNLC, and Homoclinic) demonstrate that both NODE and PINN architectures are capable of reproducing the dynamics of the Morris–Lecar system under varying levels of complexity—provided that a sufficient number of training epochs is allowed. In simpler bifurcation scenarios such as Hopf and SNLC, both models gradually align with the ground-truth trajectories. In the homoclinic regime, only the PINN—when extensively trained—initially manages to approximate the intricate dynamics, while the NODE model struggles to converge. However, as shown in Remark~\ref{rem:SiLU}, the use of the SiLU activation function significantly improves the NODE’s performance, enabling it to capture the orbit structure with greater fidelity in this challenging regime.
\subsection{Quantitative Evaluation}\label{sec4.2}

To systematically assess the predictive performance of both models, we conducted a quantitative evaluation across three representative dynamical regimes—Hopf, SNLC, and Homoclinic—using multiple training durations ranging from $1,000$ to $20,000$ epochs. The comparison relied on a comprehensive set of metrics: MSE, RMSE, MAE, Maximum Error, MAPE, Root RMSPE, and the $R^2$. Evaluation was performed separately for the two state variables $V$ and $N$, with results aggregated in Tables \ref{tab:PINN} and \ref{tab:NODE}.

As shown in Table \ref{tab:PINN}, the PINN achieved consistently superior accuracy across all scenarios and training durations. For instance, in the Hopf regime at $20,000$ epochs, the PINN attained an RMSE of just $0.72$ for $V$, and an $R^{2}$ score of $0.9996$—indicative of near-perfect reconstruction of the system dynamics. Similarly, in the SNLC case, the RMSE remained below $1.0$ for both variables across all training regimes, confirming robustness.

Conversely, Table \ref{tab:NODE} reveals that the NODE model exhibited higher sensitivity to both scenario complexity and training duration. While NODE achieved acceptable accuracy in the Hopf regime (e.g., RMSE $V = 1.04$ at $20,000$ epochs), it consistently underperformed in the SNLC and homoclinic cases. Notably, in the homoclinic scenario, NODE failed to reduce the RMSE below $6.0$ even after $20,000$ training epochs, and its $R^{2}$ value dropped significantly, indicating poor alignment with the reference trajectory.

Furthermore, across all scenarios, the maximum error and MAPE were markedly lower for the PINN, suggesting that the integration of physical constraints enhances generalization and reduces extreme deviations. These trends were particularly evident in stiff regimes such as the homoclinic bifurcation, where PINN’s inclusion of ODE residuals in the loss function preserved the temporal structure of spike events—a property that NODE struggled to replicate.

In summary, the quantitative results provide compelling evidence that physics-informed architectures not only accelerate convergence but also improve predictive fidelity across a wide range of dynamical behaviors. This is especially critical in regimes where long-term accuracy and stability are essential.

Figure \ref{fig:totalmse} illustrates the evolution of MSE across training epochs for both the NODE and PINN architectures in all three dynamical regimes. The PINN demonstrates a smooth and monotonic decrease in error, reflecting strong convergence properties and improved generalization as training progresses. In contrast, the NODE exhibits non-monotonic and unstable behavior—achieving temporary error reductions, followed by a marked deterioration, especially in the Homoclinic regime. 
\begin{figure}[!htbp]
    \centering
    \begin{subfigure}[t]{0.48\textwidth}
        \centering
        \includegraphics[width=\textwidth]{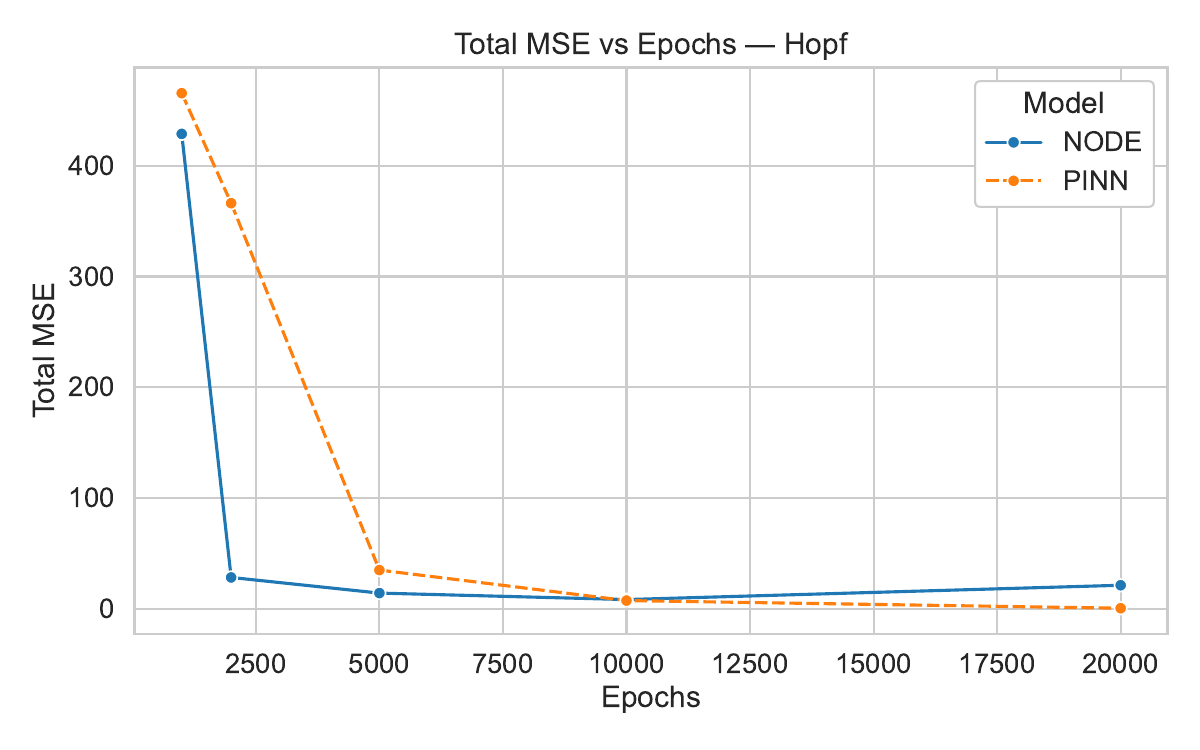}
        \caption{Hopf regime.}
    \end{subfigure}
    \hfill
    \begin{subfigure}[t]{0.48\textwidth}
        \centering
        \includegraphics[width=\textwidth]{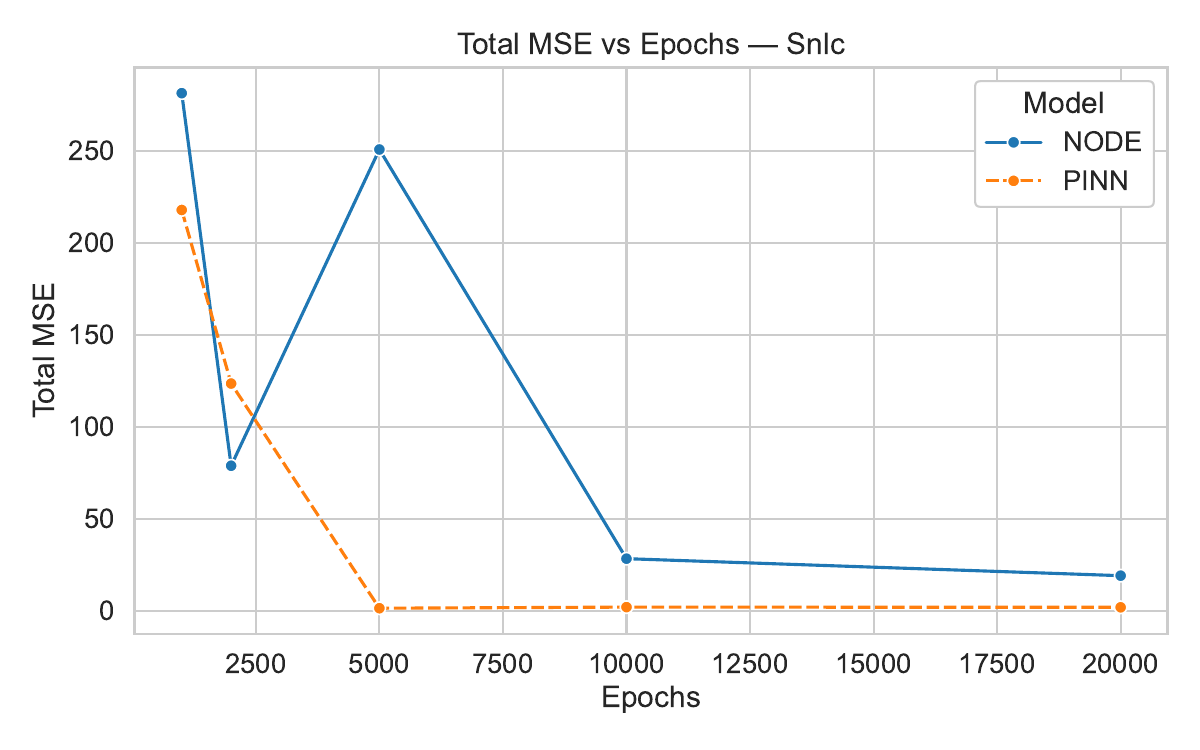}
        \caption{SNLC regime.}
    \end{subfigure}
    \begin{subfigure}[t]{0.48\textwidth}
        \centering
        \includegraphics[width=\textwidth]{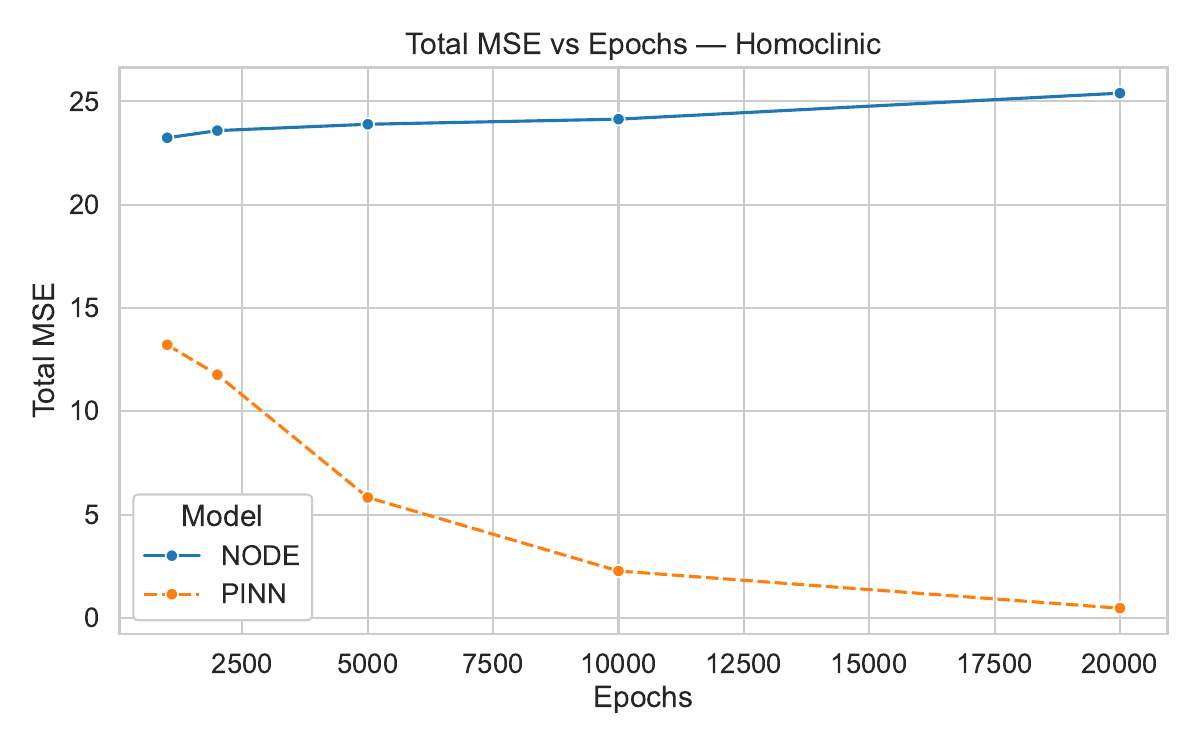}
        \caption{Homoclinic regime.}
    \end{subfigure}
		\caption{Total MSE across epochs for NODE and PINN models in the Hopf, SNLC, and Homoclinic regimes. The total MSE is computed as the sum of the MSE values for $V$ and $N$; PINN achieves lower error in all cases, especially in the Homoclinic regime.}
      \label{fig:totalmse}
\end{figure}

\subsection*{Interpretation of Metrics Across Architectures}

Although both PINNs and NODEs—produce predictions of the same physical variables $(V, N)$ in the Morris--Lecar system, their training procedures are fundamentally different and must be taken into account when interpreting the evaluation metrics.

Specifically, NODEs rely purely on data-driven learning: they learn a vector field $f_\theta(y)$ such that the numerical solution $\hat{y}(t)$ matches the observed trajectories, using supervised loss on scaled data. In contrast, PINNs approximate the solution $y_\theta(t)$ directly by minimizing both the data loss and a physics-based residual loss derived from the governing differential equations, and they do so without any data normalization.

To ensure a fair comparison, all evaluation MSE, RMSE, MA, $R^2$, MAPE, and RMSPE—are computed in physical units (after inverse scaling in the case of NODEs). Nevertheless, due to the differences in objective functions, regularization effects, and numerical sensitivities, these metrics reflect not only predictive accuracy but also the inherent biases of each learning paradigm. Therefore, performance differences must be interpreted in light of these methodological distinctions.

Table~\ref{tab:best_models_comparison} provides a comparative summary of the best-performing models for each regime, highlighting key performance metrics—including total MSE, RMSE, and training time—for both the PINN and NODE frameworks. This table encapsulates the optimal results achieved across all training settings, allowing for a direct evaluation of model effectiveness and computational cost.
\begin{table}[htbp]
\centering
\caption{Comparison of Best Performing Models (PINN vs NODE) per Scenario}
\label{tab:best_models_comparison}
\begin{tabularx}{\textwidth}{@{}l c c c c c c c@{}}
\toprule
\textbf{Scenario} & \textbf{Method} & \textbf{Epochs} & \textbf{Total MSE} & \textbf{RMSE\textsubscript{V}} & \textbf{RMSE\textsubscript{N}} & \textbf{Training Time (s)} \\
\midrule
\multirow{2}{*}{Hopf} 
  & PINN  & 20000  & 3.0577e-01 & 0.5529 & 0.0069 & 145.74   \\
  & NODE  & 10000 & 8.1303e+00 & 2.8514 &  0.0035 & 2857.54   \\
\midrule
\multirow{2}{*}{SNLC} 
  & PINN  & 5000 & 1.6022e+00 & 1.2656 & 0.0221 & 37.69   \\
  & NODE  & 20000  & 1.9270e+01 & 4.3898 & 0.0101 & 6143.12  \\
\midrule
\multirow{2}{*}{Homoclinic} 
  & PINN  & 20000  & 4.6681e-01 & 0.6832 &   0.0091 & 146.94    \\
  & NODE  & 1000  & 2.3236e+01 & 4.8199 & 0.0672 & 251.52 sec   \\
\bottomrule
\end{tabularx}
\end{table}


\section{Discussion and Conclusions}

This study undertook a detailed comparison between two modern frameworks for learning dynamical systems from data: PINNs~\cite{raissi2019, karniadakis2021, lu2021} and NODEs~\cite{chen2019, dupont2019, massaroli2021}. Using the Morris--Lecar model \cite{gasparinatou2022, morris1981,fenichel1979,ermentrout2010, li2011} as a benchmark, we investigated their performance across three distinct dynamical regimes: Hopf bifurcation, SNLC, and Homoclinic orbit.

The overall results indicate that the performance of each model is highly dependent on the complexity of the dynamics. In the relatively smooth Hopf regime, both PINNs and NODEs achieved good accuracy. The PINN model, however, demonstrated greater training efficiency and consistency. For instance, at $10,000$ epochs, the PINN reached a total MSE of only $8.059$ with a training time of $142$ seconds, whereas the NODE achieved a lower MSE of $5.661$ but required over $2800$ seconds. Notably, the PINN attained an $R^2_V$ of $0.998$, indicating an almost perfect reconstruction of voltage dynamics, with significantly smaller maximum errors.

In the SNLC regime, the system becomes more nonlinear and sensitive to parameter changes. Here, the advantage of the PINN approach becomes more pronounced. At $20,000$ epochs, the PINN achieved a total MSE of $4.044$, an RMSE of $1.573$, and an $R^2_V$ of $0.980$. The NODE, despite reducing its total MSE to $3.745$, exhibited severe trajectory distortions, as evidenced by a large $\mathrm{MAXERR}_V$ of $62.218$, compared to only $1.656$ for the PINN. Moreover, NODE training time increased drastically ($6489$ seconds), while the PINN remained below $40$ seconds. These results highlight that NODEs, though flexible, may fail to maintain trajectory fidelity without additional constraints.

The most challenging scenario was the homoclinic regime, characterized by slow-fast dynamics and bifurcation sensitivity. In this case, the PINN consistently preserved qualitative features such as the prolonged approach to the saddle-node and the escape dynamics. Quantitatively, the PINN reached a total MSE of $0.467$ at $20,000$ epochs, with $\mathrm{MAXERR}_V$ of just $1.715$ and a strong $R^2_V$ of $0.958$. In contrast, the NODE failed to converge meaningfully: even at $20,000$ epochs with advanced activation functions (e.g., SiLU, remark~\ref{rem:SiLU}), it remained at a high total MSE of $23.236$ and a $\mathrm{MAXERR}_V$ of $5.665$. These shortcomings suggest that NODEs struggle in regimes where trajectory stability is crucial, possibly due to the absence of a physics-informed inductive bias.

A closer look at the evolution of metrics across epoch settings (Appendix Tables~\ref{tab:PINN} and~\ref{tab:NODE}) confirms that PINNs benefit from progressive refinement with training, particularly in smoother regimes. In stiff regimes like homoclinic, the PINN retains relative stability and physical consistency, whereas the NODE demonstrates erratic convergence, high sensitivity to initialization, and increased computational cost.

In terms of efficiency, PINNs proved significantly more computationally economical. Across all scenarios and epochs, the training time for PINNs remained under $150$ seconds, while NODEs often exceeded $1000$ seconds and reached up to $6489$ seconds in the SNLC regime at $20,000$ epochs—despite no corresponding gain in accuracy.

In conclusion, this study demonstrates that PINNs are particularly well-suited for modeling systems where the governing equations are known and the dynamics are stiff or complex. The embedded physical constraints guide the network toward plausible solutions, offering robustness, stability, and lower computational burden~\cite{karniadakis2021}.

By contrast, NODEs maintain a key advantage: they can learn directly from data without prior knowledge of the system's structure. This flexibility is valuable in scenarios where the physics is partially understood, highly noisy, or prohibitively complex~\cite{rackauckas2021, kidger2020}. Nevertheless, their application to stiff biological models remains challenging, requiring careful normalization, architectural tuning, or external regularization.

Overall, our results highlight the critical role of inductive bias in learning complex biological dynamics. When prior knowledge is available, PINNs provide robust and efficient solutions. NODEs, although more general, face challenges in stiff regimes without proper constraints. 

\section{Limitations and Future Work}
\label{lim}
While this study provides a focused comparison between PINNs and NODEs using the Morris–Lecar model, it is subject to several limitations. First, the evaluation is based on a low-dimensional (two-dimensional) neuronal system. Although this setting allows for analytical tractability, it does not capture the complexity of higher-dimensional or spatially distributed biological models. Second, both methods were implemented with fixed hyperparameters and standard fully connected architectures. The use of more advanced neural designs—such as residual connections, recurrent units, or attention mechanisms—could significantly affect model performance. Third, the NODE models were trained on noise-free, uniformly sampled data. Such idealized conditions simplify the learning task compared to real-world scenarios, where data may be noisy, incomplete, or sampled at irregular intervals.

Although training time and floating-point operation counts were reported, a detailed analysis of computational scalability and solver efficiency lies beyond the scope of this study. These limitations suggest several directions for future research, including evaluations on more complex systems, incorporation of noisy or sparse data, and the exploration of adaptive or hybrid modeling strategies. The development of hybrid approaches that combine the respective strengths of PINNs and NODEs represents a particularly promising avenue for future investigation.
\begin{appendices}
\begin{sidewaystable}
\scriptsize
\section{}\label{secA1}
\caption{Detailed PINN Performance Across All Epochs and Scenarios}
\label{tab:PINN}
\begin{tabular}{@{}l r r r r r r r r r r r@{}}
\toprule
\textbf{Scenario} & \textbf{Epochs} & \textbf{Total MSE} & \textbf{$\mathrm{MAPE}_V$} & \textbf{$\mathrm{MAPE}_N$} &
\textbf{$\mathrm{MAE}_V$} & \textbf{$\mathrm{MAE}_N$} & \textbf{$R^2_V$} & \textbf{$R^2_N$} &
\textbf{$\mathrm{MAXERR}_V$} & \textbf{$\mathrm{MAXERR}_N$} & \textbf{Time (s)} \\
\midrule
             hopf &            1000 &            465.644 &               88695577.180 &                 993439.777 &                    16.260 &                     0.098 &            0.300 &           -0.210 &                       56.598 &                        0.371 &             6.890 \\
             hopf &            2000 &            366.243 &               14151248.974 &                 896333.463 &                    12.922 &                     0.077 &            0.449 &            0.149 &                       57.480 &                        0.375 &            15.134 \\
             hopf &            5000 &             34.774 &                2402399.507 &                  42864.153 &                     3.744 &                     0.032 &            0.948 &            0.828 &                       17.441 &                        0.231 &            36.431 \\
             hopf &           10000 &              7.202 &                 705903.823 &                  33737.237 &                     1.616 &                     0.014 &            0.989 &            0.977 &                        9.834 &                        0.080 &            73.688 \\
             hopf &           20000 &              0.306 &                 241962.693 &                  47727.391 &                     0.271 &                     0.004 &            1.000 &            0.997 &                        3.303 &                        0.036 &           145.744 \\
             snlc &            1000 &            217.796 &                 823407.161 &                  27580.982 &                     8.915 &                     0.042 &            0.405 &            0.355 &                       55.646 &                        0.397 &             8.091 \\
             snlc &            2000 &            123.560 &                5283144.839 &                 215580.793 &                     6.967 &                     0.047 &            0.662 &            0.460 &                       45.254 &                        0.331 &            15.284 \\
             snlc &            5000 &              1.602 &                1133676.703 &                  62993.930 &                     0.437 &                     0.010 &            0.996 &            0.962 &                       10.693 &                        0.180 &            37.686 \\
             snlc &           10000 &              2.206 &                 312527.248 &                  48876.481 &                     0.570 &                     0.008 &            0.994 &            0.961 &                       10.935 &                        0.167 &            76.209 \\
             snlc &           20000 &              2.100 &                 572976.069 &                  17269.348 &                     0.525 &                     0.006 &            0.994 &            0.980 &                       12.023 &                        0.125 &           162.772 \\
       homoclinic &            1000 &             13.218 &               32313362.091 &                 662893.201 &                     2.080 &                     0.028 &            0.507 &            0.453 &                       16.871 &                        0.204 &             8.152 \\
       homoclinic &            2000 &             11.769 &                3834142.861 &                 600851.690 &                     2.167 &                     0.035 &            0.561 &            0.410 &                       14.528 &                        0.181 &            14.604 \\
       homoclinic &            5000 &              5.827 &                 592331.496 &                 150909.513 &                     1.337 &                     0.018 &            0.783 &            0.764 &                        9.545 &                        0.125 &            38.434 \\
       homoclinic &           10000 &              2.269 &                 852403.136 &                 131646.454 &                     0.859 &                     0.013 &            0.915 &            0.896 &                        6.932 &                        0.087 &            76.320 \\
       homoclinic &           20000 &              0.467 &                  76218.942 &                  36291.641 &                     0.404 &                     0.006 &            0.983 &            0.981 &                        2.932 &                        0.035 &           146.939 \\
\bottomrule
\end{tabular}
\end{sidewaystable}

\begin{sidewaystable}
\scriptsize
\caption{Detailed NODE Performance Across All Epochs and Scenarios}
\label{tab:NODE}
\begin{tabular}{@{}l r r r r r r r r r r r@{}}
\toprule
\textbf{Scenario} & \textbf{Epochs} & \textbf{Total MSE} & \textbf{$\mathrm{MAPE}_V$} & \textbf{$\mathrm{MAPE}_N$} &
\textbf{$\mathrm{MAE}_V$} & \textbf{$\mathrm{MAE}_N$} & \textbf{$R^2_V$} & \textbf{$R^2_N$} &
\textbf{$\mathrm{MAXERR}_V$} & \textbf{$\mathrm{MAXERR}_N$} & \textbf{Time (s)} \\
\midrule
             hopf &            1000 &            428.841 &                    214.801 &                     48.470 &                    15.299 &                     0.090 &            0.355 &            0.310 &                       53.275 &                        0.276 &           232.181 \\
             hopf &            2000 &             28.105 &                     65.192 &                      5.342 &                     2.984 &                     0.010 &            0.958 &            0.993 &                       31.270 &                        0.028 &           488.823 \\
             hopf &            5000 &             13.974 &                     36.552 &                      2.637 &                     1.852 &                     0.004 &            0.979 &            0.998 &                       25.789 &                        0.015 &          1293.660 \\
             hopf &           10000 &              8.130 &                     38.001 &                      1.636 &                     1.143 &                     0.003 &            0.988 &            0.999 &                       22.397 &                        0.013 &          2857.542 \\
             hopf &           20000 &             21.052 &                     40.509 &                      4.739 &                     2.991 &                     0.010 &            0.968 &            0.991 &                       22.901 &                        0.048 &          5833.402 \\
             snlc &            1000 &            281.210 &                    143.535 &                    527.669 &                    11.403 &                     0.079 &            0.231 &           -0.061 &                       58.137 &                        0.387 &           249.012 \\
             snlc &            2000 &             78.934 &                     61.774 &                    130.922 &                     6.530 &                     0.021 &            0.784 &            0.921 &                       27.010 &                        0.144 &           542.979 \\
             snlc &            5000 &            250.626 &                    124.842 &                    412.268 &                    10.407 &                     0.071 &            0.315 &            0.105 &                       58.191 &                        0.371 &          1302.418 \\
             snlc &           10000 &             28.500 &                     25.000 &                     38.475 &                     3.524 &                     0.007 &            0.922 &            0.990 &                       22.275 &                        0.042 &          2904.200 \\
             snlc &           20000 &             19.270 &                     22.964 &                     52.490 &                     2.957 &                     0.007 &            0.947 &            0.992 &                       19.262 &                        0.047 &          6143.116 \\
       homoclinic &            1000 &             23.236 &                     83.023 &                     16.846 &                     2.633 &                     0.035 &            0.134 &           -0.058 &                       23.078 &                        0.347 &           251.524 \\
       homoclinic &            2000 &             23.586 &                     81.253 &                     16.294 &                     2.663 &                     0.033 &            0.121 &            0.146 &                       22.201 &                        0.260 &           566.709 \\
       homoclinic &            5000 &             23.893 &                     79.025 &                     16.108 &                     2.671 &                     0.032 &            0.109 &            0.224 &                       21.820 &                        0.225 &          1708.793 \\
       homoclinic &           10000 &             24.140 &                     80.213 &                     16.471 &                     2.684 &                     0.032 &            0.100 &            0.224 &                       21.934 &                        0.220 &          3142.986 \\
       homoclinic &           20000 &             25.399 &                     80.567 &                     17.618 &                     2.936 &                     0.034 &            0.053 &            0.153 &                       19.920 &                        0.228 &          5201.115 \\
\bottomrule
\end{tabular}
\end{sidewaystable}
\end{appendices}
\clearpage 

\section*{Declarations}

\paragraph{Availability of data and materials} All data and source code used in this study are available on GitHub: \url{https://github.com/nikmatz/Morris-lecar_NODE_vs_PINNS}.

\paragraph{Competing interests} The authors declare that they have no known competing financial interests or personal relationships that could have appeared to influence the work reported in this paper.

\paragraph{Funding} Not applicable.

\paragraph{Authors' contributions} NM designed and performed all experiments, analyzed the results, and wrote the manuscript. SF contributed to early discussions and reviewed the final version. All authors read and approved the final manuscript.

\bibliographystyle{plainnat}
\bibliography{Comparing_Physics_Informed_and_Neura_ODE}
\end{document}